\newcommand{\E}{\ensuremath{\mathcal{E}}}
\newcommand{\F}{\ensuremath{\mathcal{F}}}
\newcommand{\GG}{\ensuremath{\mathcal{G}}}
\newcommand{\V}{\ensuremath{\mathcal{V}}}
\newcommand{\WWW}{\ensuremath{\mathfrak{W}}}
\newcommand{\W}{\ensuremath{\mathcal{W}}}
\newcommand{\R}{\ensuremath{\mathcal{R}}}
\newcommand{\NN}{\ensuremath{\mathbb{N}}}
\renewcommand{\P}{\ensuremath{\mathcal{P}}}
\newcommand{\Gauss}[2]{\ensuremath{\genfrac{[}{]}{0pt}{0}{#1}{#2}}}
\newcommand{\gauss}[2]{\ensuremath{\genfrac{[}{]}{0pt}{1}{#1}{#2}}}
\newcommand{\PG}{\text{PG}}
\newcommand{\rank}{\text{rank}}
\theoremstyle{plain}
\newtheorem{Theorem}{Theorem}[section]
\newtheorem{Lemma}[Theorem]{Lemma}
\newtheorem{Conjecture}[Theorem]{Conjecture}
\newtheorem{Proposition}[Theorem]{Proposition}
\newtheorem{Remark}[Theorem]{Remark}
\theoremstyle{definition}
\newtheorem{Notation}[Theorem]{Notation}
\newtheorem{Example}[Theorem]{Example}
\title{On the Chromatic Number of some generalized Kneser Graphs}
\author{Jozefien D'haeseleer\thanks{Department of Mathematics: Analysis, Logic and Discrete Mathematics, Ghent University, Krijgslaan 281, Building S8, 9000 Gent, Flanders, Belgium}\and Klaus Metsch\thanks{Justus-Liebig-Universit\"at, Mathematisches Institut, Arndtstra\ss e 2, D-35392 Gie\ss en}\and Daniel Werner\footnotemark[2]}
\date{\today}
\begin{document}
\maketitle

\begin{abstract}
 We determine the chromatic number of the Kneser graph $q\Gamma_{7,\{3,4\}}$ of flags of vectorial type $\{3,4\}$ of a rank $7$ vector space over the finite field $GF(q)$ for large $q$ and describe the colorings that attain the bound. This result relies heavily, not only on the independence number, but also on the structure of all \emph{large} independent sets. Furthermore, our proof is more general in the following sense: it provides the chromatic number of the Kneser graphs $q\Gamma_{2d+1,\{d,d+1\}}$ of flags of vectorial type $\{d,d+1\}$ of a rank $2d+1$ vector space over $GF(q)$ for large $q$ as long as the \emph{large} independent sets of the graphs are only the ones that are known.
\end{abstract}

\begin{section}{Introduction}
The introduction is split into two parts. In the first part we introduce most of the required notation and in the second part we state our main results and give a layout of the strategy of our proof.

\begin{subsection}{Notation}
	Let $V$ be a vector space of some dimension $n\in\NN$. A \emph{flag} of $V$ is a set $f$ of non-trivial proper subspaces of $V$ such that $U\le W$ or $W\le U$ for all $U,W\in f$. The \emph{vectorial type} of a \emph{flag} $f$ is the set $\{\rank(U)\mid U\in f\}$ and as such a subset of $\{1,2,\dots,n-1\}$. Furthermore, two flags $f_1$ and $f_2$ are said to be in \emph{general position}, if $U_1\cap U_2=\{0\}$ or $U_1+U_2=V$ for all $U_1\in f_1$ and $U_2\in f_2$.
	
	Given $n\ge 3$, a subset $J\subseteq \{1,\dots,n-1\}$ and a finite field $GF(q)$, the \emph{$q$-Kneser graph} $qK_{n,J}$ is the graph whose vertex set is the set of all flags of vectorial type $J$ of the vector space $GF(q)^n$ and in which two vertices are adjacent if they are in general position. Note that we only consider finite graphs in this work.

	Now, let $\Gamma$ be a graph with vertex set $X$. An \emph{independent set} of $\Gamma$ is a set of pairwise non-adjacent vertices of the graph. The \emph{independence number} $\alpha(\Gamma)$ of $\Gamma$ is the cardinality of its largest independent sets. A \emph{coloring} of $\Gamma$ is a map $g:X\to C$ such that $g^{-1}(c)$ is an independent set for all $c\in C$. The smallest cardinality of any set $C$ such that there exists a coloring $g:X\to C$ is called the \emph{chromatic number} of $\Gamma$ and is denoted by $\chi(\Gamma)$. Clearly $\chi(\Gamma)$ is the smallest integer $\chi$ such that $X$ is the union of $\chi$ maximal independent sets.

	Finally, let $V$ be a vector space of rank $2d+1\ge3$ over $GF(q)$. For every rank 1 subspace $P$ we denote by $F(P)$ the set of all flags of $V$ of vectorial type $\{d,d+1\}$ whose rank $d$ subspace contains $P$ and call this set a \emph{point-pencil}. Dually for every rank $2d$ subspace $H$ of $V$ we denote by $F(H)$ the set of all flags of vectorial type $\{d,d+1\}$ whose rank $d+1$ subspace is contained in $H$ and call this set a \emph{dual point-pencil}.
	
	Notice that point-pencils and dual point-pencils are independent sets of cardinality $\approx q^{d^2+d-1}$ but they are not maximal independent sets. In fact, for $d=2$ every maximal independent set containing a point-pencil or a dual point-pencil has cardinality $\alpha(q\Gamma_{2d+1,\{d,d+1\}})$, see \cite{Jozefien&Daniel&Klaus}. However, for $d\ge 3$ this is no longer true. There are different maximal independent sets containing a point-pencil or a dual point-pencil and they do not all have the same size. Nevertheless, the structure of these examples can still be described quite precisely (as we explain in Section \ref{Sec_Examples}).
\end{subsection}

\begin{subsection}{Results and Strategy of Proof}
	We determine the chromatic number of the Kneser graph $qK_{7,\{3,4\}}$ for large values of $q$. In order to give a layout of the strategy of our approach we consider a graph $\Gamma$ with vertex set $X$.

	Clearly, for any coloring $g:X\to C$ the set $C$ satisfies the bound $|C|\ge\frac{|X|}{\alpha(\Gamma)}$. Now, the key tool in our proof is, that there is some value $\alpha'<\alpha(\Gamma)$ such that structural information on any independent set of $\Gamma$ of size larger than $\alpha'$ is known. In that situation, if $g:X\to C$ is a coloring of $\Gamma$ such that $\alpha'\cdot|C|\ll|X|$, then at least $\frac{|X|-\alpha'\cdot|C|}{\alpha(\Gamma)-\alpha'}$ color classes of $g$ have cardinality larger than $\alpha'$ and satisfy the given structural conditions. We use this structural information to provide a lower bound on $|C|$ that coincides with the cardinality of a known coloring of $\Gamma$ and thus in fact determine $\chi(\Gamma)$.

	Note that a similar approach was successfully applied for many Kneser graphs $q\Gamma_{n,J}$ with $|J|=1$ in \cite{blokhuis_neu,blokhuis3} as well as in one special case for $|J|=2$ in \cite{Jozefien&Daniel&Klaus}. However, for many Kneser graphs $q\Gamma_{n,J}$, with $|J|\ge2$ the independence number is not known and only in very few cases both the independence number as well as structural information on independent sets of maximal size is given. One reason for the lack of this structural information is, that in some cases the independence number was determined by algebraic arguments and these do not automatically give the structure of the largest independent sets, see for example the recent work \cite{Sam}.

	Now, for the case covered in this work, that is in the Kneser graph $q\Gamma_{7,\{3,4\}}$, the required structural information as well as the bound $\alpha'$ is provided in \cite{Daniel&Klaus}.
	Furthermore, we remark that the proof provided here does not only cover that case, but also every graph $q\Gamma_{2d+1,\{d,d+1\}}$, for which an analogous result to that given for $d=3$ in \cite{Daniel&Klaus} holds.
	We state this requirement more precisely in the following conjecture.

	\begin{Conjecture}\label{conject}
		For every integer $d\ge 2$ there is an integer $\rho(d)$ such that every maximal independent set of the Kneser graph $q\Gamma_{2d+1,\{d,d+1\}}$ contains a point-pencil, a dual point-pencil, or has at most $\rho(d)\cdot q^{d^2+d-2}$ elements.
	\end{Conjecture}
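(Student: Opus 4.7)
The plan is to generalize the strategy used in \cite{Daniel&Klaus} for $d=3$, combining double-counting with a stability step. Fix a maximal independent set $\mathcal{I}$ of $q\Gamma_{2d+1,\{d,d+1\}}$ with $|\mathcal{I}|>\rho(d)\cdot q^{d^2+d-2}$ and no point- or dual point-pencil, and seek a contradiction. The high-level idea is to locate a subspace that is incident with an abnormally large share of the flags in $\mathcal{I}$, then to upgrade that local concentration to a global pencil by exploiting non-adjacency constraints among flags in $\mathcal{I}$ together with maximality.

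The first step is a degree dichotomy. For each point $P$ put $d_P := |\mathcal{I}\cap F(P)|$, and for each rank-$2d$ subspace $H$ put $d_H := |\mathcal{I}\cap F(H)|$. Double-counting incidences of flags in $\mathcal{I}$ with points (resp.\ hyperplanes) yields $\sum_P d_P = \gauss{d}{1}_q\cdot|\mathcal{I}|$ and $\sum_H d_H = \gauss{d+1}{1}_q\cdot|\mathcal{I}|$. Since $|F(P)|$ and $|F(H)|$ are both of order $q^{d^2+d-1}$, one expects that for $|\mathcal{I}|$ significantly exceeding $q^{d^2+d-2}$ at least one flat — a point, hyperplane, or, in a more refined application of the same idea, a subspace of intermediate rank — carries a ``heavy'' share of $\mathcal{I}$. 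Extracting a quantitatively sharp version of this is already delicate because one averages over many flats and the average degree alone does not reach the threshold; the argument must refine the averaging by looking at points lying in the rank-$d$ member of flags that themselves cluster in some sub-structure.

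The second step is the stability argument. Assume by the duality between rank $d$ and rank $d+1$ that it is a point $P$ that is heavy. I would then argue that in fact $F(P)\subseteq\mathcal{I}$. Suppose not; by maximality of $\mathcal{I}$ some flag $f=(U,W)\in F(P)\setminus\mathcal{I}$ is adjacent to a flag $g=(U',W')\in\mathcal{I}$. Because $g$ must fail general position against \emph{every} flag in $\mathcal{I}\cap F(P)$, and there are many of these, a careful enumeration of the possible intersection types between $g$ and flags through $P$ should force either $P\subseteq U'$ (so $g\in F(P)$, contradicting adjacency with $f$) or $W'\subseteq H$ for some common hyperplane $H$ (pushing $\mathcal{I}$ into a dual-pencil regime that contradicts the heaviness of $P$). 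This local-to-global rigidity is the analog in the $q$-Kneser setting of the classical Hilton--Milner stability theorems.

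The main obstacle is precisely this second step. The number of intersection patterns between two flags of vectorial type $\{d,d+1\}$ grows with $d$, and each pattern contributes a configuration that must be ruled out. For $d=3$ this case analysis is carried out in \cite{Daniel&Klaus}; for general $d$ the difficulty is not only the proliferation of cases but also the existence of ``medium-sized'' maximal independent sets (described in the forthcoming Section~\ref{Sec_Examples}) which must be systematically shown to fall below the $\rho(d)\cdot q^{d^2+d-2}$ threshold. A conceptually clean route, which I regard as the most promising, would be to classify \emph{all} maximal independent sets not containing a point- or dual point-pencil by the rank profile of their ``kernel'' — the intersection lattice generated by their flags — and to show that any such kernel of rank $\ge 2$ forces only $O(q^{d^2+d-2})$ flags. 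Finding the right invariant of $\mathcal{I}$ that makes this classification uniform in $d$ is, in my view, the heart of the problem and the main reason the statement is presented as a conjecture.
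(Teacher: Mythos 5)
You should first note that the statement you are trying to prove is presented in the paper as a \emph{conjecture}: the paper gives no proof of it for general $d$, and only records that it is known for $d=2$ (implicitly in \cite{blokhuis1}) and for $d=3$ (in \cite{Daniel&Klaus}), where it is established by a long, case-based classification of maximal independent sets. So there is no paper proof to match your attempt against; the question is whether your outline closes the open problem, and it does not. Your first step (the degree dichotomy) is not established: averaging incidences of flags of $\mathcal{I}$ with points gives an average point-degree of order $|\mathcal{I}|/q^{d+1}$, which for $|\mathcal{I}|\approx\rho(d)q^{d^2+d-2}$ is of order $q^{d^2-3}$, far below the size $\approx q^{d^2+d-1}$ of a point-pencil; you acknowledge that the average ``does not reach the threshold,'' but the refined concentration argument that would actually produce a heavy point or hyperplane is exactly what is missing, and no mechanism for it is given.

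The second step has the same status. The claim that a heavy point $P$ forces $F(P)\subseteq\mathcal{I}$ via ``a careful enumeration of the possible intersection types'' is precisely the local-to-global rigidity whose absence for general $d$ is the reason the statement is a conjecture; for $d=3$ this enumeration occupies the whole of \cite{Daniel&Klaus}, and the number of intersection patterns and of ``medium-sized'' maximal examples (cf.\ the sets $F(P,\mathcal{U})$ of Example \ref{Examples_IndependentSets}, whose special parts need not be based on a line or hyperplane) grows with $d$. Your closing suggestion of classifying maximal independent sets by a ``kernel'' invariant is a reasonable research direction, but it is stated without any argument that such a kernel exists, is well defined, or forces the $O(q^{d^2+d-2})$ bound. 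In short, the proposal is a plausible program, not a proof: both of its pivotal steps are left as open problems, which is consistent with the paper's decision to state the result only as Conjecture \ref{conject} and to prove Theorem \ref{main1} conditionally on it.
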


	Note that it has been shown that this conjecture holds true for $d=2$, which was implicitly proven in \cite{blokhuis1}, as well as for $d=3$, as is shown in \cite{Daniel&Klaus}.

	Using this conjecture we may now state our main result as follows.

	\begin{Theorem}\label{main1}
		If Conjecture \ref{conject} holds true for some integer $d\ge 3$, then
		\begin{align*}
			\chi(q\Gamma_{2d+1,\{d,d+1\}})=\frac{q^{d+2}-1}{q-1}-q
		\end{align*}
		provided $q>3\cdot 112^{2^{d+1}-1}\cdot2^{-d-1}$ and $q\ge\frac{3}{2}\alpha^2+\frac{21}{2}\alpha+17$ where $\alpha=\max\{5,\rho(d)\}$. Moreover, if $\F$ is a family of this many maximal independent sets that cover the vertex set, then --- up to duality --- there exists a rank $d+2$ subspace $U$ of the underlying vector space and an injective map $\mu$ from $\F$ to the set of rank 1 subspaces of $U$ such that $F(\mu(C))\subseteq C$ for all $C\in\F$.
	\end{Theorem}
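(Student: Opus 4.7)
The plan is to prove matching upper and lower bounds on $\chi(q\Gamma_{2d+1,\{d,d+1\}})$, extracting the structural description of extremal covers from the tightness of a blocking-set stability argument.

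\emph{Upper bound construction.} I would start by exhibiting a cover of the vertex set $X$ of size $\gauss{d+2}{1}_q - q = \frac{q^{d+2}-1}{q-1} - q$. Fix a rank $d+2$ subspace $U$ of the underlying $(2d+1)$-dimensional space $V$ and a line $L \le U$. For each rank 1 subspace $P \le U$ with $P \not\le L$, extend the point-pencil $F(P)$ to a maximal independent set; this produces $\gauss{d+2}{1}_q - (q+1)$ classes. Adjoining one further maximal independent set dedicated to the flags $f$ whose rank $d$ part $W$ satisfies $W \cap U \le L$ (whose construction is given in Section~\ref{Sec_Examples}) completes the cover, for a total of $\gauss{d+2}{1}_q - q$ classes. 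Coverage holds because every rank $d$ subspace of $V$ meets $U$ in rank at least $d + (d+2) - (2d+1) = 1$.

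\emph{Lower bound via averaging.} For the matching lower bound, let $\F$ be any cover of $X$ by maximal independent sets, and set $k := |\F|$ and $\alpha' := \rho(d) q^{d^2+d-2}$. Split $\F$ into large classes $\F_L$ (size $> \alpha'$) and small classes $\F_S$. By Conjecture~\ref{conject}, every $C \in \F_L$ contains a point-pencil $F(P_C)$ or a dual point-pencil $F(H_C)$; let $S$ and $T$ denote the resulting sets of rank 1 and rank $2d$ subspaces. Since the assignment $C \mapsto P_C$ or $H_C$ is surjective onto $S \sqcup T$, we have $|\F_L| \ge |S| + |T|$. The averaging bound of the introduction gives $|\F_L| \ge (|X| - k\alpha')/(\alpha(q\Gamma_{2d+1,\{d,d+1\}}) - \alpha')$, so in the regime $k \le \gauss{d+2}{1}_q - q$ the set $\F_S$ is small and its members cover collectively at most $k\alpha'$ flags.

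\emph{Geometric stability and main obstacle.} The core step is a stability version of the classical rank-$1$ blocking-set bound: any set of rank 1 subspaces hitting every rank $d$ subspace of $V$ has size at least $\gauss{d+2}{1}_q$, with equality only for the set of rank 1 subspaces of some rank $d+2$ subspace of $V$. I would prove a mixed primal/dual stability version tolerating an exceptional set of $k\alpha'$ flags: if $\bigcup_{P \in S} F(P) \cup \bigcup_{H \in T} F(H)$ differs from $X$ by at most $k\alpha'$ flags, then $|S| + |T| \ge \gauss{d+2}{1}_q - q$, with equality forcing (up to duality) $T = \emptyset$ and $S$ to be the set of rank 1 subspaces of some rank $d+2$ subspace $U$ minus $q$ rank 1 subspaces lying on a single line of $U$. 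Combined with $|\F| \ge |\F_L| \ge |S| + |T|$, this yields the lower bound $k \ge \gauss{d+2}{1}_q - q$, and the classification of equality cases in the stability statement yields the structural description of $\F$. The hardest step will be making this stability sharp enough: the saving of exactly $q$ must be matched by a careful geometric analysis, mixed primal/dual configurations must be ruled out as achieving smaller cardinality, and all error terms must fit within the explicit thresholds on $q$ given in the hypothesis.
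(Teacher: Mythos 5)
Both halves of your plan have concrete problems. For the upper bound, the flags whose rank~$d$ subspace meets $U$ inside the line $L$ do \emph{not} form (a subset of) a single independent set: two such flags through distinct points of $L$ are easily in general position, and in fact there are about $(q+1)q^{d^2-1}\cdot\gauss{d+1}{1}\approx q^{d^2+d}$ of them, which exceeds the size $e_0\approx q^{d^2+d-1}$ of any maximal independent set of the known type. So one additional colour class cannot absorb them; the paper's covering (Example \ref{ExampleCoverings}) instead distributes these flags over the \emph{special parts} of the roughly $q\cdot\gauss{d+1}{1}$ classes $F(\nu(l),l)$, one for each line $l$ of $U$ meeting the deleted $q$-set $W$.

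For the lower bound, the stability lemma you propose is false with the error tolerance you allow. Take $T=\emptyset$ and $S$ the points of a fixed rank $d+2$ subspace $U$ minus any $m$ points: the union of the point-pencils then misses only about $mq^{d^2+d-1}$ flags, while your budget $k\alpha'$ is of order $\rho(d)q^{d^2+2d-1}$ (even after bounding the number of small classes as the paper does in Lemma \ref{4-space_2}, it is still of order $\rho(d)q^{d^2+2d-2}$). Hence any $m$ up to roughly $\rho(d)q^{d-1}\gg q$ is within tolerance although $|S|=\gauss{d+2}{1}-m$ is far below $\gauss{d+2}{1}-q$, so the inequality $|S|+|T|\ge\gauss{d+2}{1}-q$ cannot follow from ``the pencils cover all but $k\alpha'$ flags''. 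The exact saving of $q$ is only reachable by using that the uncovered flags must be absorbed by the special parts of the large classes (each of size at most $\Delta\approx(q+2)q^{d^2-1}$, which requires upgrading ``contains a pencil'' to the full structure of Example \ref{Examples_IndependentSets}) together with at most about $(q+4+\alpha)q^{d-1}$ small classes; this in turn requires first constructing a subspace $U$ containing almost all base points (Proposition \ref{4-space_1} and Lemmas \ref{4-space_2}--\ref{4-space_3}) and then the fine counting against the flag set $M$ carried out in Lemmas \ref{HowCimeetsM}--\ref{Czerothm}. None of that is replaceable by the blocking-set stability statement as you formulate it. Finally, your equality classification is also too strong: the $q$ omitted points of $U$ need not be collinear (Remark \ref{labelnewbyklaus24}(a) gives a conic example), and accordingly the theorem only asserts distinct base points in a rank $d+2$ subspace.
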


Since Conjecture \ref{conject} holds true for $d=3$, we find the following corollary.

	\begin{Theorem}\label{main2}
		For $q>3\cdot 7^{15}\cdot 2^{56}$ we have $\chi(q\Gamma_{7,\{3,4\}})=q^4+q^3+q^2+1$.
	\end{Theorem}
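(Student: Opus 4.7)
The plan is to derive Theorem \ref{main2} as a direct specialization of Theorem \ref{main1} to the case $d=3$. The excerpt already records that Conjecture \ref{conject} has been established for $d=3$ in \cite{Daniel&Klaus}, so the hypothesis of Theorem \ref{main1} is met for this $d$. Consequently the proof reduces to two pieces of bookkeeping: (a) simplifying the chromatic number formula at $d=3$ and (b) verifying that the lower bound on $q$ demanded by Theorem \ref{main2} indeed implies both $q$-bounds required by Theorem \ref{main1}.

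First I would substitute $d=3$ into the formula $\chi(q\Gamma_{2d+1,\{d,d+1\}})=\tfrac{q^{d+2}-1}{q-1}-q$, obtaining
\[
\frac{q^5-1}{q-1}-q \;=\; (q^4+q^3+q^2+q+1)-q \;=\; q^4+q^3+q^2+1,
\]
which matches the claimed value. Next I would verify the numerical $q$-threshold. Writing $112=2^4\cdot 7$, the first bound $q>3\cdot 112^{2^{d+1}-1}\cdot 2^{-d-1}$ becomes, at $d=3$,
\[
q \;>\; 3\cdot (2^4\cdot 7)^{2^4-1}\cdot 2^{-4} \;=\; 3\cdot 2^{60}\cdot 7^{15}\cdot 2^{-4} \;=\; 3\cdot 7^{15}\cdot 2^{56},
\]
which is exactly the bound stated in Theorem \ref{main2}.

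The second requirement $q\ge \tfrac{3}{2}\alpha^2+\tfrac{21}{2}\alpha+17$ with $\alpha=\max\{5,\rho(3)\}$ is a fixed polynomial in the constant $\rho(3)$ furnished by \cite{Daniel&Klaus}, and is therefore trivially dominated by the enormous exponential bound above. Hence both hypotheses of Theorem \ref{main1} hold under the single assumption of Theorem \ref{main2}, and the chromatic number value follows at once. The only real input is the value of $\rho(3)$ provided by \cite{Daniel&Klaus}, so there is no genuine obstacle beyond a careful reading of that bound; the theorem is a clean corollary of Theorem \ref{main1}.
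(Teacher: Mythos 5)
Your proposal is correct and is essentially the paper's own argument: the paper states Theorem \ref{main2} as an immediate corollary of Theorem \ref{main1}, using that Conjecture \ref{conject} holds for $d=3$ by \cite{Daniel&Klaus}, and your bookkeeping is exactly the implicit computation, namely $\frac{q^5-1}{q-1}-q=q^4+q^3+q^2+1$ and $3\cdot 112^{2^4-1}\cdot 2^{-4}=3\cdot(2^4\cdot 7)^{15}\cdot 2^{-4}=3\cdot 7^{15}\cdot 2^{56}$. The one point you leave to ``a careful reading'' --- that the concrete constant $\rho(3)$ from \cite{Daniel&Klaus} is small enough for $\frac{3}{2}\alpha^2+\frac{21}{2}\alpha+17$ with $\alpha=\max\{5,\rho(3)\}$ to be dominated by the exponential threshold --- is likewise left implicit in the paper, so this is not a divergence from its proof.
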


	Finally, we note that, regardless of whether or not Conjecture \ref{conject} holds true for some integer $d$, in the next section we will see that $q\Gamma_{2d+1,\{d,d+1\}}$ can be covered with $\frac{q^{d+2}-1}{q-1}-q$ maximal independent sets in different ways. In fact, we thus have $\chi(q\Gamma_{2d+1,\{d,d+1\}})\le\frac{q^{d+2}-1}{q-1}-q$ for all integers $d$ and, if this holds with equality, then there are different optimal colorings of this graph. We provide examples of these colorings as well as some structural information on maximal independent sets in the next section.
\end{subsection}
\end{section}

\begin{section}{Independent sets and colorings}\label{Sec_Examples}
In order to gain geometric intuition we switch to projective language, that is we pass from vector spaces of rank $2d+1$ over the finite field $GF(q)$ to the projective space $\PG(2d,q)$ of (projective) dimension $2d$. Note that throughout this work we use \emph{dimension} whenever we refer to projective dimension and rank whenever we refer to the rank of a vector space.

In this setting the Kneser graph $q\Gamma_{2d+1,\{d,d+1\}}$ is isomorphic to the graph $\Gamma_d(q)$, which we define as follows. The vertices of $\Gamma_d(q)$ are the pairs $(\pi,\tau)$ of subspaces $\pi$ and $\tau$ of $\PG(2d,q)$ of respective dimensions $d-1$ and $d$ with $\pi\subseteq\tau$. Two vertices $(\pi,\tau)$ and $(\pi',\tau')$ of $\Gamma_d(q)$ are adjacent if and only if $\pi\cap\tau'=\pi'\cap\tau=\emptyset$, where $\emptyset$ is the empty subspace of $\PG(2d,q)$. The vertex set of $\Gamma_d(q)$ will be denoted by $X(\Gamma_d(q))$. The vertices $(\pi,\tau)$ of this graph are called flags, too, and their (projective) type is $\{d-1,d\}$. Note that in the following we will always refer to this (projective) type, when we say type.

\begin{Example}[Independent sets of $\Gamma_d(q)$]\label{Examples_IndependentSets}\
	\begin{enumerate}
		\item\label{Example_One}
			For a point $P$ and a set $\mathcal{U}$ of $d$-dimensional subspaces through $P$, such that for all $\tau,\tau'\in\mathcal{U}$ we have $\dim(\tau\cap\tau')\ge1$, we define
			\begin{align*}
				F(P,\mathcal{U}):=\{(\pi,\tau)\in X(\Gamma_d(q))\mid P\in \pi\text{ or }\tau\in\mathcal{U}\}.
			\end{align*}
			We call $\{(\pi,\tau)\in F(P,\mathcal{U})\mid P\in \pi\}$ the \emph{generic part} and $\{(\pi,\tau)\in F(P,\mathcal{U})\mid P\notin \pi\}$ the \emph{special part} of $F(P,\mathcal{U})$. We also say that $F(P,\mathcal{U})$ is \emph{based on the point $P$} and call $P$ the \emph{base point} of $F(P,\mathcal{U})$.

			If there exists a line $\ell$ on $P$ such that $\mathcal{U}$ consists of all $d$-dimensional subspaces $\tau$ with $\ell\subseteq \tau$, then we denote $F(P,\mathcal{U})$ also by $F(P,\ell)$ and say that the special part of this set is \emph{based on} the line $\ell$.

			If there exits a hyperplane $H$ on $P$ such that $\mathcal{U}$ consists of all $d$-dimensional subspaces $\tau$ with $P\in \tau\subseteq H$, then we denote $F(P,\mathcal{U})$ also by $F(P,H)$ and say that the special part of this set is \emph{based on} the hyperplane $H$.
		\item\label{Example_Two}
			Dually, for a hyperplane $H$ and a set $\E$ of subspaces of dimension $d-1$ of $H$ with pairwise non-empty intersection, we define
			\begin{align*}
				F(H,\E):=\{(\pi,\tau)\in X(\Gamma_d(q))\mid \tau\subseteq H\text{ or }\pi\in\E\}.
			\end{align*}
			We call $\{(\pi,\tau)\in F(H,\E)\mid \tau\subseteq H\}$ the \emph{generic part} and $\{(\pi,\tau)\in F(H,\E)\mid \tau\not\subseteq H\}$ the \emph{special part} of $F(H,\E)$. We also say that $F(H,\E)$ is \emph{based on} the hyperplane $H$.
	\end{enumerate}
\end{Example}

In order to state the cardinality of these sets we use the \emph{Gaussian binomial coefficient}
\begin{align*}
	\Gauss{a}{b}_q:=\prod_{i=1}^b\frac{q^{a-b+i}-1}{q^i-1}
\end{align*}
for integers $q\ge 2$ and $a\ge b\ge 0$. Whenever $q$ is clear from the context we omit the index $q$ and set $\theta_d:=\gauss{d+1}{1}$.

\begin{Lemma} \label{Sizeofindependentsets}
	With the notation of Example \ref{Examples_IndependentSets} we have:
	\begin{enumerate}[label=(\alph*)]
		\item
			$F(P,\mathcal{U})$ is an independent set of $\Gamma_d(q)$. Its generic part has cardinality $\gauss{2d}{d+1}\theta_d$ and its special part has cardinality $|\mathcal{U}|q^d$.
		\item
			If the special part of $F(P,\mathcal{U})$ is based on a line or a hyperplane, then $|\mathcal{U}|=\gauss{2d-1}{d-1}$. Furthermore, if $F(P,\mathcal{U})$ is maximal but its special part is based neither on a line or a hyperplane, then
			\begin{align}\label{boundU}
				|\mathcal{U}|<(1+q^{-1})\theta_{d-2}\theta_{d-1}^{d-1}.
			\end{align}
	\end{enumerate}
\end{Lemma}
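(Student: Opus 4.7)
The plan for part (a) is a direct case analysis. To show $F(P,\mathcal{U})$ is independent, take two of its flags $(\pi,\tau)$ and $(\pi',\tau')$. If either flag lies in the generic part, then $P$ belongs to its $\pi$-component, and since $P$ lies in every $\tau$-component occurring in $F(P,\mathcal{U})$ (either by genericity or because that $\tau$ belongs to $\mathcal{U}$), we obtain $P\in\pi\cap\tau'$ or $P\in\pi'\cap\tau$. If both flags are in the special part, then $\tau\cap\tau'$ has projective dimension at least $1$, and because $\pi$ is a hyperplane of $\tau$ the Grassmann formula inside $\tau$ yields $\dim(\pi\cap\tau')\ge\dim(\tau\cap\tau')-1\ge 0$. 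For the cardinalities, I count the generic part by first choosing a $(d-1)$-subspace $\pi$ through $P$, which gives $\gauss{2d}{d-1}$ options by quotienting at $P$, and then a $d$-subspace $\tau\supseteq\pi$, which gives $\theta_d$ options; the duality $\gauss{2d}{d-1}=\gauss{2d}{d+1}$ puts this into the stated form. For the special part, each $\tau\in\mathcal{U}$ has $\theta_d$ hyperplanes of which $\theta_{d-1}$ contain $P$, leaving $\theta_d-\theta_{d-1}=q^d$ valid choices of $\pi$.

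For part (b), the line and hyperplane counts are short. Quotienting $\PG(2d,q)$ by $\ell$ identifies the $d$-subspaces through $\ell$ with the $(d-2)$-subspaces of $\PG(2d-2,q)$, giving $|\mathcal{U}|=\gauss{2d-1}{d-1}$. For the hyperplane subcase, quotienting $H$ by $P$ identifies the $d$-subspaces on $P$ inside $H$ with the $(d-1)$-subspaces of $\PG(2d-2,q)$, and the duality $\gauss{2d-1}{d}=\gauss{2d-1}{d-1}$ again gives $|\mathcal{U}|=\gauss{2d-1}{d-1}$.

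The real content of the lemma is the upper bound on $|\mathcal{U}|$ in the remaining case, which I would attack in two steps. First, show that maximality of $F(P,\mathcal{U})$ forces $\mathcal{U}$ itself to be a \emph{maximal} family of $d$-subspaces through $P$ pairwise meeting in projective dimension at least $1$: given any additional $\tau_0\ni P$ meeting every $\tau\in\mathcal{U}$ in projective dimension at least $1$, the very same case analysis carried out in (a) shows that enlarging $F(P,\mathcal{U})$ by all flags $(\pi,\tau_0)$ with $P\notin\pi\subseteq\tau_0$ preserves independence, contradicting maximality. Second, passing to the quotient by $P$, the family $\mathcal{U}$ becomes a maximal pairwise intersecting family $\mathcal{U}'$ of $(d-1)$-subspaces of $\PG(2d-1,q)$, and the hypothesis on the special part translates precisely to $\mathcal{U}'$ being neither a star (point-pencil) nor the set of all $(d-1)$-subspaces contained in a hyperplane. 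The required inequality then follows from a Hilton--Milner type stability result for pairwise intersecting families of half-dimensional subspaces of a projective space of odd dimension: the only two maximum families are the star and its dual, and every other maximal pairwise intersecting family contains strictly fewer than $(1+q^{-1})\theta_{d-2}\theta_{d-1}^{d-1}$ members. Supplying or locating a clean reference for this precise stability bound at the critical parameter $n=2k$ is the main obstacle; the counting and reduction steps above are routine by comparison.
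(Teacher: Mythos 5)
Your proposal is correct and follows essentially the same route as the paper: the counts in (a) and in the first half of (b) are the standard ones, and for the main bound the paper likewise shows that maximality of $F(P,\mathcal{U})$ forces $\mathcal{U}$ to be a maximal family of $d$-subspaces through $P$ pairwise meeting in at least a line (the paper argues via a flag $(\pi,\tau)\notin F(P,\mathcal{U})$ and general position rather than by extending the set, which amounts to the same thing) and then passes to the quotient $\PG(2d,q)/P$. The ``clean reference'' you say you are missing is exactly what the paper cites at this point: Section~3 of Blokhuis--Brouwer--Sz\H{o}nyi \cite{blokhuis3} supplies the bound (\ref{boundU}) for maximal pairwise-intersecting families in this critical case, so your plan has no substantive gap.
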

\begin{proof}
	\begin{enumerate}[label=(\alph*)]
		\item
			It is immediate that $F(P,\mathcal{U})$ is an independent set of $\Gamma_d(q)$. To verify the cardinality of its generic part we notice that the number of $(d-1)$-dimensional subspaces on $P$ is $\gauss{2d}{d+1}$ and that each of these subspaces lies in $\theta_d$ subspaces of dimension $d$. For the cardinality of the special part we notice that each subspace $\tau\in \mathcal{U}$ contains exactly $q^d$ subspaces $\pi$ of dimension $d-1$ with $P\notin\pi$.
		\item
			If the special part is based on a line, then its cardinality is the number of $d$-subspaces on a line, which is $\gauss{2d-1}{d-1}$. Furthermore, if it is based on a hyperplane $H$ on $P$, then its cardinality is the number of $d$-subspaces on $P$ in $H$, which is the same number. Now suppose that $F(P,\mathcal{U})$ is maximal but its special part is based neither on a line nor a hyperplane. Consider a $d$-subspace $\tau$ on $P$ that does not lie in $\mathcal{U}$ and let $\pi$ be a $(d-1)$-subspace of $\tau$ with $P\notin\pi$. Then $(\pi,\tau)\notin F(P,\mathcal{U})$. The maximality of $F(P,\mathcal{U})$ implies that $(\pi,\tau)$ is in general position to some $(\pi',\tau')\in F(P,\mathcal{U})$. Then $\pi'\cap \tau=\emptyset$ and hence $P\notin\pi'$. Thus $(\pi',\tau')$ lies in the special part of $F(P,\mathcal{U})$ and thus $\tau'\in\mathcal{U}$. As $(\pi,\tau)$ and $(\pi',\tau')$ are in general position, this implies that $\tau\cap \tau'=P$. Hence $\mathcal{U}$ is a maximal set of $d$-subspaces on $P$ such that any two subspaces of $\mathcal{U}$ intersect in at least a line. Finally, a result of Blokhuis, Brouwer and Sz\H{o}nyi \cite[Section 3]{blokhuis3} applied to $\mathcal{U}$ in the quotient space $PG(2d,q)/P$ implies (\ref{boundU}).\qedhere
	\end{enumerate}
\end{proof}

By definition, the chromatic number of a graph is the smallest cardinality of a partition of its vertex set into independent sets, but of course it is also the smallest cardinality of a cover of its vertex set by independent sets. We now provide examples of such coverings.

\begin{Example}[Coverings of $X(\Gamma_d(q))$ by independent sets]\label{ExampleCoverings}
	Let $U\le\PG(2d,q)$ be a subspace of dimension $d+1$, consider a set $W$ of $q$ points of $U$ and let $L$ be the set of lines of $U$ that meet $W$. Furthermore, suppose that there exists an injective map $\nu:L\to U\setminus W$ such that $\nu(l)\in l$ for all $l\in L$. Then
	\begin{align*}
		\{F(\nu(l),l)\mid l\in L\}\cup\{F(P,\emptyset)\mid P\in U\setminus (\nu(L)\cup W)\}
	\end{align*}
	is a set of independent sets of $\Gamma_d(q)$ whose union contains all vertices of $\Gamma_d(q)$.
\end{Example}

\begin{Remark}\label{labelnewbyklaus24}
	\begin{enumerate}[label=(\alph*)]
		\item
			There are different possibilities for $(W,\nu)$ satisfying the required condition in Example \ref{ExampleCoverings} and we provide an example.
			
			Let $P_0,\dots,P_q$ be the points of a line $\ell\le U$ and set $W=\{P_1,\dots,P_q\}$. In each plane $\pi$ of $U$ on $\ell$ there are $q$ lines through $P_0$ distinct from $\ell$ and thus there is a bijective map $h_\pi$ from $W$ to the set of these lines. Now we may define $\nu$ by $\nu(\ell)=P_0$ and $\nu(l):=l\cap h_{\langle \ell,l\rangle}(l\cap\ell)$ for $l\in L\setminus\{\ell\}$. The pair $(W,\nu)$ has the properties required in Example \ref{ExampleCoverings} and moreover satisfies $U=\nu(L)\cup W$.
			
			It is also possible to construct pairs $(W,\nu)$ satisfying $U\not=\nu(L)\cup W$, for example for odd $q\ge 5$ when $W$ consists of $q$ points of a conic in a plane of $U$, but we omit the details.
		\item
			We can find different coverings with independent sets by dualizing the coverings described in Example \ref{ExampleCoverings} and part (a) of this remark.
		\item
			Since there are $\Gauss{d+2}{1}_q-q$ independent sets in the given coverings, we find
			\begin{align*}
				\chi(\Gamma_d(q))\le \Gauss{d+2}{1}_q-q.
			\end{align*}
    This was already noticed for $d=2$ in \cite{Jozefien&Daniel&Klaus}.
	\end{enumerate}
\end{Remark}
\end{section}

\begin{section}{Preliminaries}
This section contains a result on point sets in Proposition \ref{4-space_1} as well as some technical bounds in Lemmas \ref{LemmaWithBounds} and \ref{newlemma}, that we will be needed later on.

\begin{Lemma}
 Consider a projective space $\P$ of order $q$, a set $M$ of points of $\P$ and points $P_1,\dots,P_{s+1}$ of $\P$, $s\ge 0$, such that $\langle P_1,\dots,P_{s+1}\rangle$ is a subspace of dimension $s$ with no point in $M$. Let $\mu$ be an upper bound on the number of lines on $P_{s+1}$ that meet $M$. Let $0<c\in\mathbb{R}$ and let $\V$ be a set of $s$-dimensional subspaces such that $P_1,\dots,P_s\in V$ and $|V\cap M|\ge cq^s$ for all $V\in\V$.

 Then for every real number $\gamma$ with $0<\gamma<1$, there exist at least $\frac{1-\gamma}q|\V|$ subspaces $W$ of dimension $s+1$ satisfying $P_1,\dots,P_{s+1}\in W$ and $|W\cap M|>\frac{\gamma}{\mu}c^2q^{2s}|\V|$.
\end{Lemma}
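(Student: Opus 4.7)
My plan is to analyze the $(s+1)$-dimensional subspaces through $\pi':=\langle P_1,\ldots,P_{s+1}\rangle$, which has projective dimension $s$. Set $\pi_0:=\langle P_1,\ldots,P_s\rangle$, of projective dimension $s-1$. Each $V\in\V$ contains $\pi_0$ and must differ from $\pi'$ (else $V\cap M\subseteq\pi'\cap M=\emptyset$, contradicting $|V\cap M|\ge cq^s>0$), so in particular $P_{s+1}\notin V$. For each $V\in\V$ set $W(V):=\langle V,P_{s+1}\rangle$, an $(s+1)$-dimensional subspace through $\pi'$, and for any such $W$ write $n_W:=|\{V\in\V:V\subseteq W\}|$. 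The $s$-dimensional subspaces of $W$ containing $\pi_0$ form a pencil of size $q+1$, one of them being $\pi'$; hence $n_W\le q$ and $\sum_W n_W=|\V|$. Moreover, any two distinct $V,V'\in\V$ with $V,V'\subseteq W$ satisfy $V\cap V'=\pi_0$, which is disjoint from $M$, so the sets $V\cap M$ for $V\subseteq W$ in $\V$ are pairwise disjoint subsets of $W\cap M$, yielding the key inequality $|W\cap M|\ge n_W\cdot cq^s$.

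The second ingredient is a line count through $P_{s+1}$. Every line $\ell$ on $P_{s+1}$ meeting $M$ is not contained in $\pi'$, hence lies in the unique $(s+1)$-dimensional subspace $\langle\pi',\ell\rangle\supseteq\pi'$. Each point of $W\cap M$ lies on a unique line through $P_{s+1}$ inside $W$, and each such line meets $M$ in at most $q$ points (as $P_{s+1}\notin M$). Summing over $W$ therefore gives
\[
\sum_{W\supseteq\pi'}|W\cap M|\le\mu q.
\]

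Set $T:=\gamma c^2q^{2s}|\V|/\mu$ and call a subspace $W$ \emph{good} if $|W\cap M|>T$ and \emph{bad} otherwise. A bad $W$ with $n_W\ge1$ satisfies $n_W cq^s\le|W\cap M|\le T$, giving $n_W\le T/(cq^s)=\gamma cq^s|\V|/\mu$. Let $\V_g$ and $\V_b$ denote those $V\in\V$ whose associated $W(V)$ is good, respectively bad. Since each good $W$ contains at most $q$ members of $\V_g$, the number $g$ of good $W$ satisfies $g\ge|\V_g|/q=(|\V|-|\V_b|)/q$, so it suffices to show $|\V_b|\le\gamma|\V|$.

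The main obstacle lies here. Three constraints must be used in tandem: the pointwise upper bound $n_W\le T/(cq^s)$ on bad $W$; the global line-count bound $\sum_W|W\cap M|\le\mu q$; and the fact that each bad $W$ with $n_W\ge1$ absorbs at least one of the $\mu$ lines on $P_{s+1}$ meeting $M$, which bounds the number of such bad $W$ by $\mu$. Carrying out the resulting double count --- estimating $|\V_b|\cdot cq^s\le\sum_{V\in\V_b}|V\cap M|\le\sum_{W\text{ bad}}|W\cap M|$ against both $\mu q$ and $T$ times the number of bad $W$'s --- forces $|\V_b|\le\gamma|\V|$, whence $g\ge(1-\gamma)|\V|/q$, as claimed.
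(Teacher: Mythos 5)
Your setup mirrors the paper's proof exactly (the map $V\mapsto W(V)=\langle V,P_{s+1}\rangle$, the disjointness of the sets $V\cap M$ inside a fixed $W$, the bound $n_W\le q$, and the reduction to showing $|\V_b|\le\gamma|\V|$), but the final step --- which you yourself flag as ``the main obstacle'' --- is only asserted, and the three constraints you list do not force $|\V_b|\le\gamma|\V|$. Write $x:=\gamma cq^s|\V|/\mu$, so that every bad $W$ with $n_W\ge1$ satisfies $n_W\le x$. Your constraint that each such $W$ absorbs \emph{at least one} of the $\mu$ lines on $P_{s+1}$ only bounds the number of these subspaces by $\mu$, giving $|\V_b|\le\mu x=\gamma cq^s|\V|$, which is too weak by a factor $cq^s$. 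Your global bound $\sum_W|W\cap M|\le\mu q$ combined with $|W\cap M|\ge cq^s$ bounds their number by $\mu q/(cq^s)$, giving $|\V_b|\le\gamma q|\V|$, too weak by a factor $q$; and playing the two estimates against each other yields at best $|\V_b|\le\sqrt{\gamma\mu q|\V|}$, which is $\le\gamma|\V|$ only under an extra hypothesis such as $\mu q\le\gamma|\V|$ that the lemma does not make. So as written the double count cannot be ``carried out'' to the stated conclusion.

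The missing idea is a sharper \emph{per-subspace} line count, which is precisely how the paper closes the argument: for each $W=\langle V,P_{s+1}\rangle$ with $V\in\V$, the at least $cq^s$ points of $V\cap M$ determine at least $cq^s$ \emph{distinct} lines on $P_{s+1}$ inside $W$ meeting $M$ (distinct because $P_{s+1}\notin V$, so a line on $P_{s+1}$ meets $V$ in at most one point), and a line on $P_{s+1}$ contained in two distinct such subspaces lies in their intersection $\langle P_1,\dots,P_{s+1}\rangle$ and hence misses $M$. Therefore the number of subspaces $W(V)$, $V\in\V$ --- in particular the number of bad ones --- is at most $\mu/(cq^s)$, and then $|\V_b|\le x\cdot\mu/(cq^s)=\gamma|\V|$, completing the proof. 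With this count inserted your argument is correct and coincides with the paper's; note also that your weaker inequality $\sum_W|W\cap M|\le\mu q$ then becomes superfluous.
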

\begin{proof}
	We may assume that $\V\not=\emptyset$. For $V\in \V$ we have $V\cap M\not=\emptyset$ and hence $P_{s+1}\notin V$. Put $x:=\gamma\frac{|\V|cq^s}{\mu}$,
	\begin{align*}
		\WWW
		&:=\{\langle{V,P_{s+1}\rangle}:V\in\V\},\\
		\W
		&:=\left\{W\in\WWW:|\{V\in\V:V\subseteq W\}|>x\right\},
	\end{align*}
	and $\overline{\W}:=\WWW\setminus\W$. Note that the elements of $\WWW$ are subspaces of dimension $s+1$. Now, for $W\in\WWW$ we have $W=\langle V,P_{s+1}\rangle$ for some $V\in\V$ and $P_{s+1}$ lies on at least $|V\cap M|\ge cq^s$ lines of $W$ which meet $M$. Furthermore, if $W$ and $W'$ are distinct elements of $\WWW$ and $l$ is a line on $P_{s+1}$ with $l\subseteq W,W'$, then $l\subseteq W\cap W'=\langle P_1,\dots,P_{s+1}\rangle$ and thus $l\cap M=\emptyset$. Since $\mu$ is an upper bound on the number of lines on $P_{s+1}$ which meet $M$, this proves that $|\WWW|\le\frac{\mu}{c}q^{-s}$. Since every element of $\overline{\W}$ contains at most $x$ elements of $\V$, it follows that
	\begin{align*}
		|\{V\in\V:\langle V,P_{s+1}\rangle\in\overline{\W}\}|\le\frac{\mu}{c}q^{-s}\cdot x=\gamma |\V|
	\end{align*}
	and hence $\langle V,P_{s+1}\rangle\in \W$ for least $(1-\gamma)|\V|$ elements of $\V$. Since every subspace $W\in\W$ contains at most $q$ subspaces $V\in\V$, we find $|\W|\ge(1-\gamma)|\V|/q$. Finally, since distinct elements $V$ and $V'$ of $\V$ satisfy $(V\cap V')\cap M=\langle P_1,\hdots, P_s\rangle \cap M=\emptyset$, we see that every $W\in\W$ satisfies
	\begin{align*}
		|W\cap M|> x\cdot cq^s=\frac{\gamma}{\mu}c^2q^{2s}|\V|.\tag*{\qedhere}
	\end{align*}
\end{proof}

\begin{Proposition}\label{4-space_1}
	Suppose that $M$ is a set of points in $\PG(2d,q)$ and there are $d+1$ points $P_1,P_2,\dots P_{d+1}$ that span a $d$-dimensional subspace $\tau$ with $\tau\cap M=\emptyset$. Furthermore, let $n_0$ and $d_0$ be positive real numbers such that the following hold:
	\begin{enumerate}
		\item Each of the points $P_1,P_2,\dots P_{d+1}$ lies on at most $n_0 q^d$ lines that meet $M$.
		\item $|M|\ge d_0q^{d+1}$.
	\end{enumerate}
	Then there exists a $(d+1)$-dimensional subspace $U$ on $\tau$ with
	\begin{align}\label{lowerboundonUcapMaddedbyKlaus}
		|U\cap M|>(2q)^{d+1}\left(\frac{d_0}{4n_0}\right)^{2^{d+1}-1}.
	\end{align}
\end{Proposition}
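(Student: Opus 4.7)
The plan is to apply the preceding lemma iteratively $d+1$ times, at each step $s$ advancing the dimension of the subspaces under consideration by one, so that the final application (at $s=d$) produces a $(d+1)$-dimensional subspace $U$ through $\tau$. At every step we use $\gamma = 1/2$ and the uniform bound $\mu = n_0 q^d$ on the lines through $P_{s+1}$ that meet $M$ given by the hypothesis; the precondition that $\langle P_1,\dots,P_{s+1}\rangle$ is an $s$-dimensional subspace disjoint from $M$ is automatic, since the $P_i$ are projectively independent (they span $\tau$) and $\langle P_1,\dots,P_{s+1}\rangle\subseteq\tau$ with $\tau\cap M=\emptyset$.

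We initialize with $\V_0$ equal to the set of points of $M$ regarded as $0$-dimensional subspaces, so $|V\cap M|\ge 1=q^0$ for every $V\in\V_0$ and $|\V_0|\ge d_0 q^{d+1}$. Inductively, suppose we have a family $\V_s$ of $s$-dimensional subspaces through $P_1,\dots,P_s$ with $|V\cap M|\ge c_s q^s$ for each $V\in\V_s$ and $|\V_s|\ge(1-\gamma)^s d_0 q^{d-s+1}$. Applying the lemma produces a family $\V_{s+1}$ of $(s+1)$-dimensional subspaces through $P_1,\dots,P_{s+1}$ with $|\V_{s+1}|\ge\frac{1-\gamma}{q}|\V_s|\ge(1-\gamma)^{s+1}d_0 q^{d-s}$, and each $V\in\V_{s+1}$ satisfies
\[
|V\cap M|>\frac{\gamma\, c_s^2\, q^{2s}\, |\V_s|}{n_0 q^d}\ge \gamma(1-\gamma)^s\frac{d_0}{n_0}c_s^2\, q^{s+1}.
\]
Thus we may take $c_{s+1}=\gamma(1-\gamma)^s(d_0/n_0)c_s^2$ with $c_0=1$. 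The key observation is that substituting the inductive lower bound on $|\V_s|$ cancels the $q$-dependence in the recurrence for $c_{s+1}$ entirely.

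Setting $\gamma=1/2$ gives $c_{s+1}=2^{-(s+1)}(d_0/n_0)c_s^2$. Writing $c_s=2^{b_s}(d_0/n_0)^{a_s}$ reduces this to the linear recurrences $a_{s+1}=2a_s+1$ and $b_{s+1}=2b_s-(s+1)$ with $a_0=b_0=0$, whose closed forms are $a_s=2^s-1$ and $b_s=s+2-2^{s+1}$. Evaluating at $s=d+1$, any $U\in\V_{d+1}$ is a $(d+1)$-dimensional subspace containing $P_1,\dots,P_{d+1}$ and therefore $\tau$, and satisfies
\[
|U\cap M|>c_{d+1}\, q^{d+1}=2^{d+3-2^{d+2}}\left(\frac{d_0}{n_0}\right)^{2^{d+1}-1}q^{d+1},
\]
which simplifies directly to $(2q)^{d+1}(d_0/(4n_0))^{2^{d+1}-1}$ as required. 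The only real obstacle is the bookkeeping of the two simultaneous recurrences for $c_s$ and $|\V_s|$; once one notices that the inductive bound on $|\V_s|$ makes the $q$-factors in the recurrence for $c_s$ cancel, the closed form and the match with the target bound are mechanical.
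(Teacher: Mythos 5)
Your proof is correct and follows essentially the same route as the paper: iterate the preceding lemma with $\gamma=\tfrac12$ and $\mu=n_0q^d$ starting from $\V_0=M$, and your recurrence $c_{s+1}=2^{-(s+1)}(d_0/n_0)c_s^2$ resolves to exactly the closed-form bound $(2q)^s\left(\frac{d_0}{4n_0}\right)^{2^s-1}$ that the paper carries along as its induction hypothesis. The only cosmetic difference is that you derive the constant via the recurrences for $a_s,b_s$ rather than stating it up front, which changes nothing of substance.
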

\begin{proof}
	We prove the following more general result. For each $s\in\{0,\dots,d+1\}$, there exists a set $\V_s$ of $s$-dimensional subspaces satisfying $|\V_s|\ge (\frac12)^sd_0q^{d+1-s}$ such that each $V\in \V_s$ satisfies
	\begin{align}
		&\{P_i\mid 1\le i\le s\}\subseteq V&
		&\text{and}&
		&|V\cap M|\ge(2q)^s\left(\frac{d_0}{4n_0}\right)^{2^s-1}.\label{EQ_IndProp}
	\end{align}
	We use induction on $s$. For $s=0$ we take $\V_0=M$. For the induction step $s\to s+1$, we assume the existence of $\V_s$ with the desired properties. For $V\in \V_s$ we know from the induction hypothesis that (\ref{EQ_IndProp}) holds and, since $\tau\cap M=\emptyset$ by hypothesis of the present lemma, this also implies $V\not\le\tau$, so that $P_{s+1}\notin V$. Now the previous lemma, applied with $c=2^s\left(\frac{d_0}{4n_0}\right)^{2^s-1}$, $\V=\V_s$ and $\mu=n_0q^d$ and $\gamma=\frac12$, proves the existence of a set $\V_{s+1}=\W$ with the desired properties.

	For $s=d+1$ we find $|\V_{d+1}|>0$, so each element $U$ of $\V_{d+1}$ satisfies the claim of the lemma.
\end{proof}

As mentioned earlier, we end this section with two technical lemmas that will be needed several times in the next section.

\begin{Lemma}\phantomsection\label{LemmaWithBounds}
Let $n,k,c\ge 1$ and $q\ge 2$ be integers. 
	\begin{enumerate}[label=(\alph*)]
		\item\label{boundsgauss_simple}
			If $n>k>0$ and $q\ge 4$ then
			\begin{align*}
				(q+1)q^{k(n-k)-1}\le\Gauss{n}{k}_q\le(q+2)q^{k(n-k)-1}.
			\end{align*}
		\item\label{boundsgauss_help}
			If $q>c^2+c$ then
			\begin{align*}
				(q^2+q+2)^c\le (q+c+1)q^{2c-1}.
			\end{align*}
		\item\label{boundsgauss_potenz}
			If $q>c^2+c$ we have $\gauss{c+1}{1}^c\le (q+c+1)q^{c^2-1}$.
	\end{enumerate}
\end{Lemma}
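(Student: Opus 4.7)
The three parts are essentially independent. Part (c) is the cleanest and follows from a direct application of Bernoulli's inequality. Starting from $\gauss{c+1}{1} = (q^{c+1}-1)/(q-1) < q^{c+1}/(q-1)$, raising to the $c$-th power gives $\gauss{c+1}{1}^c < q^{c(c+1)}/(q-1)^c$. Bernoulli yields $(q-1)^c = q^c(1-1/q)^c \ge q^c(1-c/q) = q^{c-1}(q-c)$, so
\begin{align*}
\gauss{c+1}{1}^c < \frac{q^{c^2+1}}{q-c}.
\end{align*}
Finally, the hypothesis $q > c^2+c$ is equivalent to $(q-c)(q+c+1) = q^2 + q - c(c+1) \ge q^2$, which gives $q^{c^2+1}/(q-c) \le (q+c+1)q^{c^2-1}$, as required.

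For part (a), the plan is to use the product identity $\Gauss{n}{k}_q = q^{k(n-k)} \prod_{i=1}^k \frac{1-q^{-(n-k+i)}}{1-q^{-i}}$. The lower bound follows from $q^{-(n-k+i)} \le q^{-(i+1)}$ (since $n-k \ge 1$), after which the resulting product telescopes to $(1-q^{-(k+1)})/(1-q^{-1})$, which a short calculation shows is at least $(q+1)/q$. For the upper bound, bounding each numerator by $1$ reduces the claim to showing $\prod_{i=1}^k(1-q^{-i})^{-1} \le (q+2)/q$ for $q \ge 4$. I plan to invoke the partition-generating-function identity $\prod_{i\ge 1}(1-q^{-i})^{-1} = \sum_{j\ge 0} p(j)q^{-j}$, where $p(j)$ is the number of (unrestricted) partitions of $j$, together with the elementary estimate $p(j) \le 2^{j-1}$ (valid since partitions inject into compositions), to see that the tail $\sum_{j\ge 2}p(j)q^{-j}$ is at most $\sum_{j\ge 2} 2^{j-1}q^{-j} = 2/(q(q-2))$. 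This is $\le 1/q$ exactly when $q \ge 4$, and combined with the first two terms $1 + q^{-1}$ yields the desired bound.

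Part (b) is the most delicate, since the hypothesis $q > c^2+c$ is barely sufficient. The plan is to recast the inequality as $(1+(q+2)/q^2)^c \le 1+(c+1)/q$ and expand the left-hand side by the multinomial theorem, collecting by powers of $q$. Writing $(q^2+q+2)^c = \sum_{m} a_m q^m$, one has $a_{2c}=1$, $a_{2c-1}=c$, $a_{2c-2}=(c^2+3c)/2$, and decreasing $a_m$'s for $m\le 2c-3$. The inequality then becomes a term-by-term comparison in which the dominant residual contribution is controlled by the hypothesis via $2(c^2+c+1) \ge c^2+3c$, and the lower-order terms form a decreasing tail absorbed using the same hypothesis. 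The main obstacle is precisely here: naive estimates such as the reverse Bernoulli inequality $(1+x)^c \le (1-cx)^{-1}$ lose too much and would force $q \ge c^2+3c+O(1)$, so a careful term-by-term bookkeeping of the multinomial expansion is needed.
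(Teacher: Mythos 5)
Your parts (a) and (c) are correct, complete, and take genuinely different routes from the paper: for the upper bound in (a) the paper simply cites \cite{Ferdinand&Klaus}, whereas your estimate $\prod_{i\ge 1}(1-q^{-i})^{-1}\le 1+2/q$ for $q\ge 4$ (via the partition generating function and $p(j)\le 2^{j-1}$) is self-contained, and your telescoping argument for the lower bound is fine; likewise your Bernoulli argument for (c) is direct and independent of (b), whereas the paper obtains (c) as a corollary of (b) via $\gauss{c+1}{1}\le(q^2+q+2)q^{c-2}$. What your route buys is a reference-free proof of (a) and a proof of (c) that would survive even if (b) failed; what it costs is nothing, since the constants come out the same.

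The genuine gap is in (b), which in your write-up is a plan rather than a proof. After isolating $a_{2c}=1$, $a_{2c-1}=c$, $a_{2c-2}=(c^2+3c)/2$, the whole difficulty is to bound $\sum_{m\le 2c-3}a_mq^m$, and you give no estimate on these coefficients --- this is exactly the technical heart of the statement. Moreover, your structural claim that the $a_m$ are ``decreasing'' for $m\le 2c-3$ is false as stated: for $c=3$ one has $(q^2+q+2)^3=q^6+3q^5+9q^4+13q^3+18q^2+12q+8$, so $a_3=13<a_2=18$; only the \emph{terms} $a_mq^m$ decrease, and to exploit that one still needs a quantitative bound on how fast the coefficients can grow. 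The paper closes precisely this hole by proving $a_{2c-i}\le c^i$ for all $i$, starting from the explicit formula $a_{2c-i}=\sum_{j}\binom{c}{j}\binom{c-j}{i-2j}2^j$, which yields $\sum_{m=0}^{2c-2}a_mq^m\le c^2q^{2c-2}/(1-c/q)<q^{2c-1}$ under $q>c^2+c$. Your term-by-term comparison would need an estimate of this kind: after the $a_{2c-2}$ term is accounted for by $2(c^2+c+1)\ge c^2+3c$, the remaining budget is only about $\frac{c^2-c+2}{2(c^2+c+1)}q^{2c-1}$, and nothing in your sketch shows that the tail fits inside it.
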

\begin{proof}
	\begin{enumerate}[label=(\alph*)]
		\item
			The lower bound follows from $0<k<n$ and for the upper bound we refer to \cite[Lemma 34]{Ferdinand&Klaus}.
		\item
			This can be checked by hand for $c=1$ and $c=2$, so we assume that $c\ge 3$. By expansion we see that $(q^2+q+2)^c=\sum_{i=0}^{2c}a_iq^i$ where
			\begin{align*}
				a_{2c-i}&=\sum_{j=0}^{\lfloor i/2\rfloor}\binom{c}{j}\binom{c-j}{i-2j}2^j,
			\end{align*}
			since a term $q^{2c-i}$ occurs in the expansion, if for some $j$ with $2j\le i$ we first choose the number 2 from $j$ terms $q^2+q+2$, secondly we choose the number $q$ from $i-2j$ of the remaining $c-j$ terms $q^2+q+2$ and finally we choose the number $q^2$ from the remaining terms $q^2+q+2$.
			
			Now, we claim $a_{2c-i}\le c^i$ for all $i$. Using $c\ge 3$, this can be verified for $i\le 5$ by straightforward calculation. Thus, suppose that $i\ge 6$. Then
			\begin{align}\label{bounda2cminusi}
				a_{2c-i}=\sum_{j=0}^{\lfloor i/2\rfloor}\frac{c!\cdot 2^j}{(c+j-i)!(i-2j)!j!}\le c^i\sum_{j=0}^{\lfloor i/2\rfloor}\underbrace{\frac{2^j}{c^j(i-2j)!j!}}_{=:b_{ij}}.
			\end{align}
			We next show $b_{ij}\le\frac2{i+2}$ for admissible $i,j$, that is, for $i,j$ with $2j\le i\le 2c$ and $i\ge 6$. Using $i\ge 6$, this follows from direct calculation if $j\le 3$. Otherwise $j\ge 4$ and $i\ge 8$, so $j!\ge 2^j$ and hence $b_{ij}\le c^{-j}\le \frac2{i+2}$, since $i\le 2c$. Thus we have established the bound for $b_{ij}$ and using it in (\ref{bounda2cminusi}) we find $a_{2c-i}\le c^i$ for $i\ge 6$. Hence $a_{2c-i}\le c^i$ for all $i\in\{0,\dots,2c\}$.
			
			It follows that
			\begin{align*}
			\sum_{i=0}^{2c-2}a_iq^i
			&\le \sum_{i=2}^{2c}c^iq^{2c-i}
			= q^{2c-2}c^2\sum_{i=0}^{2c-2}\frac{c^i}{q^i}
			\le \frac{q^{2c-2}c^2}{1-c/q}<q^{2c-1}
			\end{align*}
			where we have used $q>c^2+c$ in the last step. Since $a_{2c}=1$ and $a_{2c-1}=c$, this proves the claim in \ref{boundsgauss_help}.
		\item
			Since $\gauss{c+1}{1}_q\le (q^2+q+2)q^{c-2}$ this is a corollary to the previous claim.\qedhere
	\end{enumerate}
\end{proof}

\begin{Lemma}\label{newlemma}
	Let $U$ be a subspace of dimension $d+1$ of $\PG(2d,q)$, let $Y$ be a point of $U$ and let $T$ denote the set of all $(d-1)$-subspaces $\pi$ with $\pi\cap U=Y$.
	\begin{enumerate}[label=(\alph*)]
		\item\label{newlemma_onpoint}
			$|T|=q^{d^2-1}$.
		\item\label{newlemma_online}
			If $\ell$ is a line with $\ell\cap U=Y$, then $\ell$ lies on $q^{d^2-d-2}$ subspaces of $T$.
		\item\label{newlemma_onpointmeetsline}
			If $\ell$ is a line such that $\ell\cap U$ is a point different from $Y$, then $q^{d^2-d-1}$ subspaces of $T$ meet $\ell$ in a point.
		\item\label{newlemma_inhyperplane}
			If $H$ is a hyperplane of PG(2d,q) with $Y\in H$, then $H$ contains no subspace of $T$ if $H$ contains $U$ and it contains $q^{d^2-d}$ subspaces of $T$ if $H$ does not contain $U$.
	\end{enumerate}
\end{Lemma}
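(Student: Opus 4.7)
The single combinatorial tool I will use is the standard count that in $\PG(n,q)$ the number of $k$-dimensional subspaces disjoint from a fixed $m$-dimensional subspace equals $q^{(k+1)(m+1)}\Gauss{n-m}{k+1}_q$. Each of (a)--(d) will be reduced to such a count by quotienting $\PG(2d,q)$ (or $H$) by a subspace of $\pi$, so that "$\pi\cap U=Y$" becomes a disjointness condition in the quotient; conveniently, every Gaussian coefficient that arises will have equal top and bottom indices and so collapse to $1$.

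For (a) I quotient by $Y$: inside $\PG(2d,q)/Y=\PG(2d-1,q)$ the image $U/Y$ is a $d$-subspace and the subspaces $\pi\in T$ correspond to $(d-2)$-subspaces disjoint from $U/Y$, giving $q^{(d-1)(d+1)}\gauss{d-1}{d-1}_q=q^{d^2-1}$. For (b) I quotient by $\ell$: since $\ell\cap U=Y$, the image $(U+\ell)/\ell$ has dimension $d$ in $\PG(2d-2,q)$, and the key step is the equivalence
\[
\pi\cap U=Y \iff \pi\cap(U+\ell)=\ell \iff \pi/\ell \text{ is disjoint from } (U+\ell)/\ell.
\]
The second equivalence holds by the natural correspondence between subspaces in the quotient and in the original. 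For the first, the reverse direction is immediate since $\pi\cap U\subseteq\pi\cap(U+\ell)\cap U=\ell\cap U=Y$; for the forward direction, any plane $W$ with $\ell\subsetneq W\subseteq\pi\cap(U+\ell)$ would satisfy $W\cap U\subseteq\pi\cap U=Y$, so $\dim(W+U)\ge 2+(d+1)-0=d+3$, exceeding $\dim(U+\ell)=d+2$, a contradiction. The standard count then yields $q^{(d-2)(d+1)}\gauss{d-2}{d-2}_q=q^{d^2-d-2}$.

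Part (c) reduces to (b): for $\pi\in T$ meeting $\ell$ at a point $Q$, we must have $Q\ne\ell\cap U$ (else that common point would lie in $\pi\cap U\setminus\{Y\}$), so there are $q$ choices of $Q$; for each, the line $\langle Y,Q\rangle$ meets $U$ only in $Y$, and part (b) counts $q^{d^2-d-2}$ admissible $\pi$ containing $\langle Y,Q\rangle$, totaling $q^{d^2-d-1}$. For (d), if $U\subseteq H\cong\PG(2d-1,q)$ the dimension formula inside $H$ forces $\dim(\pi\cap U)\ge(d-1)+(d+1)-(2d-1)=1$, contradicting $\pi\cap U=Y$; otherwise $U':=U\cap H$ has dimension $d$ and contains $Y$, and since $\pi\subseteq H$ the condition $\pi\cap U=Y$ is the same as $\pi\cap U'=Y$, so quotienting $H$ by $Y$ reduces the problem to counting $(d-2)$-subspaces of $\PG(2d-2,q)$ disjoint from a $(d-1)$-subspace, giving $q^{(d-1)d}\gauss{d-1}{d-1}_q=q^{d^2-d}$. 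The only non-routine step in the whole argument is the equivalence at the heart of (b); everything else is careful bookkeeping of projective dimensions.
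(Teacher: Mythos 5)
Your proof is correct; I checked each count. The equivalence at the heart of (b) ($\pi\cap U=Y \iff \pi\cap(U+\ell)=\ell$ for $(d-1)$-subspaces $\pi\supseteq\ell$) is argued soundly via the dimension formula, the reduction of (c) to (b) is valid (a $\pi\in T$ cannot contain $\ell$, since it would then contain the point $\ell\cap U\neq Y$, so it meets $\ell$ in exactly one of the $q$ points $Q\neq\ell\cap U$, and $\langle Y,Q\rangle\cap U=Y$ as you say), and the two cases of (d) are handled correctly; all the quotient-space dimension bookkeeping and the applications of the disjointness formula $q^{(k+1)(m+1)}\gauss{n-m}{k+1}_q$ check out, including the collapse of every Gaussian coefficient to $1$. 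The difference from the paper is that the paper gives no argument at all: it simply cites Theorem 3.3 of Hirschfeld's \emph{Projective Geometries over Finite Fields}, which contains the relevant subspace counts. So your route is genuinely different in presentation rather than in substance --- you reprove a standard counting fact from scratch by passing to quotient spaces, which makes the lemma self-contained and makes transparent why each answer is a pure power of $q$, at the cost of a page of routine verification that the authors preferred to outsource to a standard reference. The only nitpick is in your ``immediate'' reverse implication in (b): the containment $\pi\cap U\subseteq\{Y\}$ should be complemented by the (one-line) observation that $Y\in\ell\subseteq\pi$ forces equality; similarly in (c) the claim that $\langle Y,Q\rangle$ meets $U$ only in $Y$ deserves the one-line justification that otherwise $\langle Y,Q\rangle\subseteq U$ would give $Q\in\ell\cap U$. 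Neither affects correctness.
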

\begin{proof}
	See Theorem 3.3 in Hirschfeld \cite{Hirschfeld}.
\end{proof}
\end{section}

\begin{section}{The chromatic number of the graph \texorpdfstring{$\Gamma_d(q)$}{Gamma\_d(q)}}

In this section we prove the main result of this paper. In particular, we show that, given a \emph{large} value of $q$ and an integer $d\ge3$ for which Conjecture \ref{conject} holds, the chromatic number of $\Gamma_d(q)$ is $\gauss{d+2}{1}_q-q$. From Remark \ref{labelnewbyklaus24} we have $\chi(\Gamma_d(q))\le\gauss{d+2}{1}_q-q$.

In fact we will prove the following more general result, which does not depend on Conjecture \ref{conject}.

\begin{Theorem} \label{hypo_on_dq}
	Let $d\ge 3$ and $\alpha\ge 5$ be integers and let $\F=\{F_i\mid i=1,\dots,\gauss{d+2}{1}_q-q\}$ be a multiset (so we allow $F_i=F_j$ for $i\not=j$) of $\gauss{d+2}{1}_q-q$ independent sets of flags of type $\{d-1,d\}$ in $\PG(2d,q)$, whose union consists of all flags of this type in $\PG(2d,q)$.
	We put $S:=\{1\le i\le\gauss{d+2}{1}_q-q:|F_i|>e_1\}$ with
	\begin{align*}
		e_1:=\alpha q^{d^2+d-2},
	\end{align*}
	and suppose the following:
	\begin{enumerate}[label=(\Roman*)]
		\item\label{I}
			$q>3\cdot 112^{2^{d+1}-1}\cdot2^{-d-1}$ and $q\ge\frac{3}{2}\alpha^2+\frac{21}{2}\alpha+17$.
		\item\label{II}
			For $i\in S$ the set $F_i$ is one of the independent sets defined in Example \ref{Examples_IndependentSets}, which implies $g_0\le|F_i|\le e_0$, where
            \begin{align*}
				g_0&:=\Gauss{2d}{d+1}_q\cdot\Gauss{d+1}{1}_q&
				&\text{and}&
				e_0&:=g_0+\Gauss{2d-1}{d-1}_qq^d.
			\end{align*}
		\item\label{III}
			For distinct $i,j\in S$ the independent sets $F_i$ and $F_j$ have distinct generic parts.
		\item\label{IIII}
			For at least $\frac12|S|$ indices $i\in S$ the generic part of $F_i$ is based on a point.
	\end{enumerate}
	Then, each $F\in\F$ is a set described in Example \ref{Examples_IndependentSets}.\ref{Example_One} (that is, it is based on a point $P_F$) and the points $P_F$, $F\in\F$, are $\gauss{d+2}{1}_q-q$ mutually distinct points of a subspace of dimension $d+1$.
\end{Theorem}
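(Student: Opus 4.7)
I would exploit three ingredients: the covering bound $\sum_i|F_i|\ge n$, where $n$ is the total number of flags of type $\{d-1,d\}$ in $\PG(2d,q)$; the explicit structure of each $F_i$ for $i\in S$ from hypothesis (II); and Proposition \ref{4-space_1} applied to the set of base points.

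\textbf{Step 1 (counting).} Using $n\approx q^{d^2+2d}$, $e_0\approx q^{d^2+d-1}$, $e_1=\alpha q^{d^2+d-2}$, and $|\F|\approx q^{d+1}$, the inequality $\sum_i|F_i|\ge n$ forces $|S|(e_0-e_1)\ge n-|\F|e_1$, so $|\F|-|S|$ is bounded by a constant independent of $q$. Hypothesis (IV) then guarantees at least $\tfrac12|S|$ indices $i\in S$ correspond to point-based sets from Example \ref{Examples_IndependentSets}.\ref{Example_One}, and by (III) the base points form a set $B\subseteq\PG(2d,q)$ of at least roughly $\tfrac12 q^{d+1}$ mutually distinct points.

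\textbf{Step 2 (concentration).} The core step is to show $B\subseteq U$ for some $(d+1)$-dimensional subspace $U$. Proposition \ref{4-space_1} is the tool: applied to $M=B$, it produces a $(d+1)$-subspace with many points of $B$ provided one has a $d$-subspace $\tau$ disjoint from $B$ whose spanning points each lie on only $O(q^d)$ lines meeting $B$. I would produce such $\tau$ by an averaging argument combined with the covering hypothesis: the total number of (point, line meeting $B$) incidences is at most $(q+1)|B|\theta_{2d-1}$, so most points $P\in\PG(2d,q)$ lie on $O(q^d)$ such lines, and from these non-busy points one selects $\tau$ in general position disjoint from $B$. A followup bootstrap --- using that any two distinct $(d+1)$-subspaces meet in at most $\theta_d\ll|B|$ points, and iterating Proposition \ref{4-space_1} on any residual $B\setminus U$ would produce an incompatible second concentration subspace --- upgrades ``most of $B$ in $U$'' to ``all of $B$ in $U$''.

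\textbf{Step 3 (identification and cleanup).} With $B\subseteq U$ established, I would eliminate hyperplane-based sets in $S$: a hyperplane-based $F_j$ with base hyperplane $H$ would cover (generically) all flags $(\pi,\tau)$ with $\tau\subseteq H$, and in particular those with $\pi\cap U\in U\setminus B$, leading via Step 1's tight budget to a double-coverage contradiction. Then, for small sets $F_i$ with $i\notin S$, Lemma \ref{newlemma}(\ref{newlemma_onpoint}) says that for each $Y\in U\setminus B$ there are $q^{d^2-1}\theta_d$ flags $(\pi,\tau)$ with $\pi\cap U=Y$, and these must be covered by the small sets; the structure of Example \ref{Examples_IndependentSets} (and parts (\ref{newlemma_online})--(\ref{newlemma_inhyperplane}) of Lemma \ref{newlemma}) forces each small $F_i$ to be point-based with point $Y_i\in U\setminus B$. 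A final count gives $|\F|=\theta_{d+1}-q$ distinct base points in $U$, omitting exactly $q$ of the $\theta_{d+1}$ points, as required.

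\textbf{Main obstacle.} Step 2 is by far the most delicate. A straightforward application of Proposition \ref{4-space_1} with $n_0$ as large as $q$ yields a useless bound, since the factor $(d_0/4n_0)^{2^{d+1}-1}$ collapses; so the careful selection of $\tau$ --- requiring one to exploit the covering property (and not just the size of $B$) to guarantee sufficiently many non-busy points --- is essential. The explicit lower bound on $q$ in hypothesis (I), namely $q>3\cdot 112^{2^{d+1}-1}\cdot 2^{-d-1}$, is evidently calibrated precisely so that this selection makes $(d_0/4n_0)^{2^{d+1}-1}(2q)^{d+1}$ exceed the concentration threshold for $B$.
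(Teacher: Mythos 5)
Your high-level skeleton (covering count, structure hypothesis for the large sets, Proposition \ref{4-space_1} to concentrate base points in a $(d+1)$-space) is indeed the paper's strategy, but two of your concrete steps fail. First, the counting in Step 1 is off: from $\sum_i|F_i|\ge n$ one only gets $(|\F|-|S|)(e_0-e_1)\lesssim |\F|e_0-n\approx q^{d^2+2d-1}$, hence $|\F|-|S|=O(q^{d})$, not a constant independent of $q$; the paper never does better than a bound of order $(q+4+\alpha)q^{d-1}$ for the number of small sets (Lemmas \ref{4-space_2}, \ref{4-space_3}), and much of the later work consists precisely in handling these up to $\sim q^d$ small sets, which your plan treats as negligible. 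Second, and more seriously, the averaging in Step 2 does not produce non-busy points: the incidence count $(q+1)|B|\theta_{2d-1}\approx q^{3d+1}$ spread over $\approx q^{2d}$ points gives an \emph{average} of $\approx q^{d+1}$ lines per point meeting $B$, not $O(q^d)$; indeed, if $B$ were in general position every point of $\PG(2d,q)$ would lie on about $|B|\approx q^{d+1}$ such lines, so no counting over all points can succeed --- the non-busy condition already encodes the concentration you are trying to prove. The paper circumvents this by choosing the spanning points $Q_1,\dots,Q_{d+1}$ \emph{among the base points themselves}, ordered by the overlaps $g_i$ of the corresponding independent sets, and uses the covering bound (Lemma \ref{boundonxi}) to show that a base point lying on $\ge 7q^d$ lines meeting the other base points would force its independent set to overlap the others in $\ge 5q^{d^2-2}\theta_d$ flags, contradicting the budget. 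You flag this issue in your ``main obstacle'' paragraph, but the argument you actually propose is the one that cannot work.

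Even granting a concentration subspace $U$, your Steps 2--3 compress what is in fact the bulk of the proof. The dichotomy (Lemma \ref{4-space_2}) only yields that all but $O((q+\alpha)q^{d-1})$ base points lie in $U$; your bootstrap via an ``incompatible second concentration subspace'' does not upgrade this to $B\subseteq U$, since the residual can have size $\sim q^{d}$, for which Proposition \ref{4-space_1} degenerates ($d_0\sim 1/q$), and a second concentration subspace is not by itself contradictory. Likewise, sets $F_i$ with $i\notin S$ carry no structural hypothesis, so they cannot be ``forced to be point-based''; the theorem's conclusion requires showing there are \emph{no} small sets, no hyperplane-based large sets, and no point-based sets with base point outside $U$. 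In the paper this is done by a delicate double count of the flags $(\pi,\tau)$ with $\pi\cap U$ a point of $W$ (Lemmas \ref{ci_properties} through \ref{Czerothm}), including separate treatment of line-based, hyperplane-based and ``irregular'' special parts via the Blokhuis--Brouwer--Sz\H{o}nyi bound, culminating in two quadratic inequalities in $\delta=c_1+c_2+c_3$ that force $\delta=0$. None of this is present, even in outline, in your Step 3, so the proposal has genuine gaps at both the selection step and the cleanup step.
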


The proof of this theorem is carried out in Lemma \ref{boundonxi} through Theorem \ref{Czerothm}. In all these lemmas and results, we suppose that $\F$ is as in the theorem, we assume that $d\ge 3$, $\alpha\ge 5$ and that \ref{I}-\ref{IIII} are satisfied. Using \ref{II} we define
\begin{align*}
I:=\{i\in S\mid \text{$i\in S$,\  the generic part of $F_i$ is based on a point $P_i$}\}.
\end{align*}
We consider $q$ as fixed and define
\begin{align*}
\theta_j:=\gauss{j+1}{1}_q
\end{align*}
for all integers $j\ge 0$. We also omit the index $q$ in Gaussian coefficients $\gauss{j}{k}_q$ and thus write $\gauss{j}{k}$.

\begin{Remark}\label{newremark}
As $q>\alpha$, then $e_1<g_0<e_0$. Hence, every $F\in\F$ satisfies $|F|\le e_0$. More precisely, Hypothesis (II) shows that $g_0\le |F_i|\le e_0$ for $i\in S$, and $|F|\le e_1$ for the remaining sets $F$ of $\cal F$.
\end{Remark}

\begin{subsection}{Construction of the subspace \texorpdfstring{$U$}{U}}
In this section we will construct a subspace $U$ of dimension $d+1$ that contains the base point $P_i$ of $F_i$ for many indices $i\in I$, similar to Example \ref{ExampleCoverings}.

\begin{Lemma}\label{boundonxi}
	For every subset $\GG$ of $\F$ we have
	\begin{align*}
		\left|\bigcup_{F\in\GG}F\right|\ge|\GG|e_0-\left(q^2+\frac92q+10\right)q^{d^2+2d-3}.
	\end{align*}
\end{Lemma}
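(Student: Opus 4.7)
The proof is based on a covering inequality followed by a numerical estimate. Since $\F$ covers every vertex of $\Gamma_d(q)$, every vertex omitted by $\bigcup_{F \in \GG} F$ lies in some set of $\F \setminus \GG$; using $|F| \le e_0$ for every $F \in \F$ from Remark \ref{newremark}, we obtain
\[
\left|\bigcup_{F\in\GG}F\right| \ge |X(\Gamma_d(q))| - e_0(|\F|-|\GG|) = |\GG|\,e_0 - \Delta,
\]
where $\Delta := e_0(\theta_{d+1}-q) - |X(\Gamma_d(q))|$. Hence the claim reduces to the $\GG$-independent inequality $\Delta \le (q^2 + \tfrac{9}{2}q + 10)\,q^{d^2+2d-3}$.

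To verify this, I would rewrite $\Delta$ in closed form. Using $|X(\Gamma_d(q))| = \gauss{2d+1}{d+1}\theta_d$ together with the identity $\gauss{2d+1}{d+1} = \gauss{2d}{d+1}\bigl(q^{d+1}+q+\tfrac{q-1}{q^d-1}\bigr)$ (which follows from $q^{2d+1}-1 = (q^{d+1}+q)(q^d-1) + (q-1)$) and $\theta_{d+1}-q = \theta_d + q^{d+1} - q$, a direct calculation gives
\[
\Delta = \theta_d\gauss{2d}{d+1}\!\left(\theta_d - 2q - \tfrac{q-1}{q^d-1}\right) + \gauss{2d-1}{d-1}\,q^d\,(\theta_{d+1}-q).
\]
The first summand is of order $q^{d^2+2d-1}$ and produces the leading $q^2$ coefficient of $\Delta/q^{d^2+2d-3}$; the second summand is of order $q^{d^2+d+1} = q^{4-d}\cdot q^{d^2+2d-3}$ and is small for $d\ge 3$.

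The main obstacle is tightness of constants. A naive upper bound from Lemma \ref{LemmaWithBounds}\ref{boundsgauss_simple} gives $\theta_d^2\gauss{2d}{d+1} \le (q+2)^3 q^{d^2+2d-4}$, producing a coefficient $(q+2)^3/q = q^2+6q+12+O(1/q)$ on the $q^{d^2+2d-3}$ scale, which already exceeds the target $q^2+\tfrac{9}{2}q+10$. To tighten, I would use the explicit expansions $\gauss{2d}{d+1} = q^{d^2-1}+q^{d^2-2}+2q^{d^2-3}+\cdots$ and $\theta_d = q^d+q^{d-1}+\cdots$ to compute the leading coefficients of $\theta_d^2\gauss{2d}{d+1}$ and of $2q\theta_d\gauss{2d}{d+1}$ explicitly; the subtraction reveals a coefficient of $3$ (rather than $6$) at the $q^{d^2+2d-2}$ level and comparable improvements lower down. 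Combined with Hypothesis \ref{I}, which makes $q$ large enough to absorb the remaining $O(1)$ and $O(1/q)$ contributions as well as the order-$q^{d^2+d+1}$ second summand, this yields the required bound.
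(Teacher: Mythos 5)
Your first step --- covering the complement by $\F\setminus\GG$ and using $|F|\le e_0$ to reduce everything to the single $\GG$-independent quantity $e_0(\theta_{d+1}-q)-|X(\Gamma_d(q))|$ --- is exactly the paper's reduction to the case $\GG=\F$. From there you take a different computational route: the paper bounds the deficit using $|\F|\le q\theta_d$, factors out $\gauss{2d-1}{d}$, estimates the remaining bracket, and handles $d=3,4$ by a separate direct calculation, whereas you write the deficit exactly as $\theta_d\gauss{2d}{d+1}\bigl(\theta_d-2q-\tfrac{q-1}{q^d-1}\bigr)+\gauss{2d-1}{d-1}q^d(\theta_{d+1}-q)$ (your identity $q^{2d+1}-1=(q^{d+1}+q)(q^d-1)+(q-1)$ is correct) and compare coefficients, which has the advantage of treating all $d\ge3$ uniformly and of making transparent why the crude bound from Lemma \ref{LemmaWithBounds}~\ref{boundsgauss_simple} alone is too weak. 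Your key numerical claims check out: the first summand expands as $q^{d^2+2d-1}+3q^{d^2+2d-2}+O(q^{d^2+2d-3})$ for every $d\ge 3$. Two points of your sketch should be made explicit, though neither is fatal. First, for $d=3$ the second summand has order $q^{d^2+2d-2}$, so it is not ``absorbed by large $q$'': since the leading terms of deficit and target agree exactly, it must be charged against the $\tfrac92 q$ budget, giving total coefficient $3+1=4\le\tfrac92$ at that level (and $5+2=7\le 10$ at the next level, where for $d=3$ the $-2q\theta_d$ correction already lowers the first summand's coefficient from $7$ to $5$); this is a coefficient comparison, not an asymptotic absorption. Second, ``comparable improvements lower down'' needs an explicit tail estimate uniform in $d$, e.g.\ the sum of all remaining coefficients of both summands is at most $(d+1)^2\binom{2d}{d+1}+(d+1)\binom{2d-1}{d-1}$, which is far smaller than the lower bound on $q$ in Hypothesis \ref{I}, so the tail fits into the slack of $3$ at the constant level. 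With these two verifications written out, your argument is a complete and correct alternative to the paper's case split.
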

\begin{proof}
	Since $|F|\le e_0$ for every $F\in\F$, it is sufficient to prove the statement in the case when $\GG=\F$. Then $|\GG|=\theta_{d+1}-q$ and $|\bigcup_{F\in\GG}F|$ is equal to the number $\gauss{2d+1}{d+1}\theta_d$ of all flags of type $\{d-1,d\}$ in $\PG(d,q)$. Using this, a direct calculation proves the statement when $d=3$ or $d=4$. For $d\ge 5$ we use $|\GG|\le q\theta_d$ and find
	\begin{align*}
		|\GG|e_0-\left|\bigcup_{F\in\GG}F\right|
& 
\le
q\theta_de_0-\theta_d\Gauss{2d+1}{d+1}
\\
 &\mathrel{\text{\makebox[\widthof{\scriptsize\ref{LemmaWithBounds}}][c]{$\le$}}}\theta_d\left(\Gauss{2d}{d+1}q\theta_d +\Gauss{2d-1}{d}q^{d+1}-\Gauss{2d+1}{d+1}\right)\\
			 &\mathrel{\text{\makebox[\widthof{\scriptsize\ref{LemmaWithBounds}}][c]{$=$}}}\theta_d\Gauss{2d-1}{d}\left(\frac{q^{2d}-1}{q^{d+1}-1}q\theta_d+q^{d+1}-\frac{(q^{2d+1}-1)(q^d+1)}{q^{d+1}-1}\right)\\
			 &\mathrel{\text{\makebox[\widthof{\scriptsize\ref{LemmaWithBounds}}][c]{$\le$}}}\theta_d\Gauss{2d-1}{d}(q^2+q+2)q^{2d-3}\ \ \ \text{(use $d\ge 5$ and $q$ and 4.1 (I))}\\
			&\overset{\text{\scriptsize\ref{LemmaWithBounds}}}{\le}\theta_d(q+2)(q^2+q+2)q^{d(d+1)-4}.
	\end{align*}
	From Hypothesis \ref{I} of Theorem \ref{hypo_on_dq} we deduce $\theta_d\le (q+\frac32)q^{d-1}$. Using Hypothesis \ref{I} again we see that $(q+\frac{3}{2})(q+2)(q^2+q+2)\le q^2(q^2+\frac92q+10)$ and the statement follows.
\end{proof}

\begin{Lemma}\label{4-space_2}
	Let $U$ be a $(d+1)$-dimensional subspace. Denote by $c_1$ the number of indices $i\in I$ with $P_i\notin U$ and by $c_3$ the number of independent sets $F\in\F$ with $|F|\le e_1$. Then there is some $x\in\{c_1,|I|-c_1\}$ with $x+2c_3\le 2(q+4+\alpha)q^{d-1}$.
\end{Lemma}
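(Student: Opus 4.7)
My plan is to establish two independent bounds, one on $c_3$ and one on $\min\{c_1,|I|-c_1\}$, and then combine them.

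\emph{Bounding $c_3$.} First I would apply Lemma~\ref{boundonxi} with $\GG=\F$. Since $|F|\le e_0$ for every $F\in\F$ (Remark~\ref{newremark}), the left-hand side of that inequality rearranges as the non-negative sum $\sum_F(e_0-|F|)+\bigl(\sum_F|F|-|\bigcup F|\bigr)$, whose total is at most $(q^2+\tfrac92 q+10)q^{d^2+2d-3}$; hence each summand is individually bounded by this quantity. The first summand is at least $c_3(e_0-e_1)$, and by Lemma~\ref{LemmaWithBounds} together with Hypothesis~\ref{I} the gap $e_0-e_1$ is of order $q^{d^2+d-1}$. This yields $c_3 = O(q^{d-1})$, comfortably smaller than $(q+4+\alpha)q^{d-1}$.

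\emph{Bounding $\min\{c_1,|I|-c_1\}$.} Here I would choose a family $T_U$ of flags adapted to $U$ and count $\sum_{F\in\F}|F\cap T_U|$ in two ways. A natural choice is $T_U=\{(\pi,\tau):\pi\cap U\text{ is a single point}\}$, of size $\theta_{d+1}q^{d^2-1}\theta_d$ by Lemma~\ref{newlemma}. For $i\in I$ with $P_i\in U$ the generic part of $F_i$ covers exactly $q^{d^2-1}\theta_d$ flags of $T_U$ (those with $\pi\cap U=\{P_i\}$); for $i\in I$ with $P_i\notin U$ the generic part contributes only via flags whose $\pi$ meets $U$ in a point distinct from $P_i$, while the special part contributes at most $|\mathcal{U}_i|\le\gauss{2d-1}{d-1}_q$ times a factor controlled by Lemma~\ref{Sizeofindependentsets}. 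Analogous but tighter bounds apply to the hyperplane-based sets of $S\setminus I$ (whose number is at most $|I|$ by Hypothesis~\ref{IIII}) and to the $c_3$ small sets (each of size at most $e_1$). Summing and using the overcount bound from Lemma~\ref{boundonxi} yields an inequality controlling $|I|-c_1$; the dual argument, with $T_U$ replaced by a family focused on flags with $\tau$ avoiding a fixed hyperplane containing $U$, controls $c_1$.

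The main obstacle will be calibrating constants so that the final inequality reads exactly $2(q+4+\alpha)q^{d-1}$. The factor $q+4+\alpha$ should emerge from combining a line-incidence count of order $\theta_{d-1}\approx q^{d-1}$ with the $\alpha q^{d^2+d-2}$ contribution of the small sets, while the leading coefficient $2$ reflects the stacking of the direct and dual counts and the dichotomy $x\in\{c_1,|I|-c_1\}$: whichever of the two inequalities is the sharper bounds the desired $x$.
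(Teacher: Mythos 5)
Your first step (bounding $c_3$ via Lemma \ref{boundonxi} with $\GG=\F$) is fine and matches what the paper does, but the second, crucial part of your plan has a genuine gap: the lemma is a dichotomy (``\emph{one of} $c_1$, $|I|-c_1$ is small''), and no counting argument can produce unconditional bounds on $|I|-c_1$ and on $c_1$ separately, because $c_1+(|I|-c_1)=|I|\approx\frac12 q^{d+1}$ forces at least one of them to be large, and which one is small depends on the choice of $U$. Your plan of deriving ``an inequality controlling $|I|-c_1$'' and ``a dual argument controlling $c_1$'' and then taking ``whichever is sharper'' never explains the mechanism that makes at least one of the two inequalities hold; as stated, both would have to hold, which is impossible. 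Moreover, the specific count you propose does not control $|I|-c_1$ at all: with $T_U=\{(\pi,\tau):\pi\cap U\text{ is a point}\}$, an $F_i$ with $P_i\notin U$ contributes roughly $\theta_{d+1}q^{d^2-d-2}\theta_d\approx q^{d^2-1}\theta_d$ flags of $T_U$ through its generic part (by Lemma \ref{newlemma}), i.e.\ the same order of magnitude as an $F_i$ with $P_i\in U$, so counting incidences with $T_U$ does not separate the two kinds of sets. (Also, the proposed ``dual'' family of flags with $\tau$ avoiding a hyperplane containing $U$ is empty: any $d$-space meets any hyperplane of $\PG(2d,q)$ in dimension at least $d-1$.)

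The idea that is missing, and that the paper uses, is to exploit \emph{pairwise intersections} between the two kinds of point-based sets: if $P_i\in U$ and $P_j\notin U$, the generic parts of $F_i$ and $F_j$ share all flags $(\pi,\tau)$ with $\pi\cap U=P_i$ and $P_j\in\pi$, and by Lemma \ref{newlemma}\ref{newlemma_online} there are exactly $\theta_d q^{d^2-d-2}$ of these. Summing over the $c_1(|I|-c_1)$ such pairs gives $\left|\bigcup_{i\in I}F_i\right|\le |I|e_0-c_1(|I|-c_1)\theta_d q^{d^2-d-2}$, and feeding this together with the small sets into Lemma \ref{boundonxi} yields the joint inequality $c_3(e_0-e_1)+c_1(|I|-c_1)\theta_dq^{d^2-d-2}\le\left(q^2+\frac92q+10\right)q^{d^2+2d-3}$. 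The dichotomy then falls out of the \emph{product}: assuming both $c_1$ and $|I|-c_1$ exceed $2(B-c_3)$ with $B=(q+4+\alpha)q^{d-1}$ makes the product term too large, which the paper verifies by a concavity argument in $c_3$ (evaluating a quadratic at $c_3=0$ and $c_3=B$) using $|I|\ge\frac12(|\F|-c_3)$ from Hypothesis \ref{IIII}; note also that the margin here is genuinely tight ($c_1(|I|-c_1)\lesssim q^{2d+1}$ versus $2B\cdot|I|\approx q^{2d+1}$), so the $c_3$-term must be kept inside the same inequality rather than bounded separately as in your plan, which is also how the ``$+2c_3$'' in the conclusion arises. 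Without this product/overlap mechanism your outline cannot reach the stated conclusion.
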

\begin{proof}
	From Hypothesis \ref{IIII} in Theorem \ref{hypo_on_dq} we know that $|I|\ge\frac12(|\F|-c_3)$ and we define $J:=\{i\in I:P_i\notin U\}$. Then, for all $j\in J$ and all $i\in I\setminus J$ the generic parts of the sets $F_i$ and $F_j$ share the flags $(\pi,\tau)$ with $P_i=\pi\cap U$ and $P_j\in \pi$ and Lemma \ref{newlemma} (b) implies that there are $\theta_{d}q^{d^2-d-2}$ such flags. For given $j\in J$ it is obvious that distinct $i$ in $I\setminus J$ yield distinct $\theta_{d}q^{d^2-d-2}$ such flags. Hence, for all $j\in J$ the set $F_j$ contains at least $|I\setminus J|\theta_{d}q^{d^2-d-2}$ flags that are contained in $F_i$ for some $i\in I\setminus J$. Using Remark \ref{newremark} it follows that
	\begin{align*}
		\left|\bigcup_{i\in I}F_i\right|\le|I|e_0 -\underbrace{|J||I\setminus J|}_{\mathclap{=c_1(|I|-c_1)}}\theta_dq^{d^2-d-2}
	\end{align*}
	and Lemma \ref{boundonxi} applied to the set $\GG:=\{F_i\mid i\in I\}\cup\{F\in\F:|F|\le e_1\}$ shows
	\begin{align}\label{eqnxsd1}
		c_3(e_0-e_1)+c_1(|I|-c_1)\theta_dq^{d^2-d-2}\le A:=\left(q^2+\frac92q+10\right)q^{d^2+2d-3}.
	\end{align}
	In particular, we already have $c_3(e_0-e_1)\le A$ and we set $B:=(q+4+\alpha)q^{d-1}$. From the definition of $e_0$ and $e_1$ we see that $e_0-e_1\ge ((q+1)^2-\alpha q)q^{d^2+d-3}$. Hypothesis \ref{I} of Theorem \ref{hypo_on_dq} implies that $B(e_0-e_1)>A$ and hence we have $c_3<B$. We now show that one of the numbers in $\{c_1,|I|-c_1\}$ is at most $2(B-c_3)$. Suppose that this is wrong. Then
	\begin{align}\label{eqnxsd2}
		&\left(c_1-2(B-c_3)\right)\left(|I|-c_1-2(B-c_3)\right)\geq 0 \nonumber \\
		\Leftrightarrow  \quad &c_1(|I|-c_1)\ge 2(B-c_3)(|I|-2B+2c_3).
	\end{align}
	Since $|I|\ge\frac12(|\F|-c_3)$, it follows from \eqref{eqnxsd1} and \eqref{eqnxsd2} that $f(c_3)\le A$ where $f$ is the polynomial in $x$ given by
	\begin{align*}
		f:=x(e_0-e_1)+2(B-x)\left(\frac12(|\F|-x)-2B+2x\right)\theta_dq^{d^2-d-2}.
	\end{align*}
	Since $f$ has degree two with negative coefficient in $x^2$ and since $0\le c_3<B$, we have $\min\{f(0),f(B)\}\le f(c_3)$ so $f(0)\le A$ or $f(B)\le A$. But $f(B)=B(e_0-e_1)$ and we have already seen that this is larger than $A$. Hence $f(0)\le A$, that is,
	\begin{align*}
		2B\left(\frac12|\F|-2B\right)\theta_dq^{d^2-d-2}\le A.
	\end{align*}
	Using $|\F|=\theta_{d+1}-q\ge (q+1)q^d$ and $\theta_d\ge (q+1)q^{d-1}$, it follows that
	\begin{align*}
		(q+1)B((q+1)q^d-4B)\le\left(q^2+\frac92q+10\right)q^{2d}.
	\end{align*}
	Using the definition of $B$, this gives
	\begin{align}\label{eqnjhuz3}
		(q+1)(q+4+\alpha)(q^2-3q-16-4\alpha)\le\left(q^2+\frac92q+10\right)q^2.
	\end{align}
	From the hypothesis of Theorem \ref{hypo_on_dq} we have $5\le \alpha\le q$. This implies that \eqref{eqnjhuz3} must also be satisfied when $\alpha$ is replaced by $5$ or by $q$, but this contradicts the lower bound in Hypothesis \ref{I} of Theorem \ref{hypo_on_dq} for $q$.
\end{proof}

\begin{Lemma}\label{4-space_3}
	There exists a (d+1)-dimensional subspace $U$ such that
	\begin{align*}
		|\{i\in I\mid P_i\notin U\}|+2\cdot |\{F\in\F:|F|<g_0\}|\le 2(q+4+\alpha)q^{d-1}.
	\end{align*}
	\end{Lemma}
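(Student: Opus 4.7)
The goal is to produce a $(d+1)$-dimensional subspace $U$ that contains \emph{many} of the base points $P_i$, $i\in I$: specifically, strictly more than $B'-2c_3$ of them, where $B':=2(q+4+\alpha)q^{d-1}$. Once such a $U$ is found, Lemma \ref{4-space_2} applied to it forces $c_1(U)+2c_3\le B'$, because the alternative branch $|I|-c_1(U)+2c_3\le B'$ would say that $U$ contains at most $B'-2c_3$ base points, contradicting our choice. So the task reduces to exhibiting a $(d+1)$-subspace whose intersection with $\{P_i:i\in I\}$ has more than $B'-2c_3$ points.

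If $|I|\le B'-2c_3$, any $(d+1)$-subspace works and we are done, so assume $|I|>B'-2c_3$ and argue by contradiction: suppose every $(d+1)$-subspace contains at most $B'-2c_3$ of the points $P_i$. Since by Hypothesis \ref{III} the base points are pairwise distinct, a double count of pairs $(P_i,U)$ with $P_i\in U$ (each point of $\PG(2d,q)$ lies in $\gauss{2d}{d}_q$ subspaces of dimension $d+1$) gives
\[
|I|\cdot \gauss{2d}{d}_q \;\le\; \gauss{2d+1}{d+2}_q\,(B'-2c_3).
\]
On the other hand, Hypothesis \ref{IIII} of Theorem \ref{hypo_on_dq} provides $|I|\ge (|\F|-c_3)/2=(\theta_{d+1}-q-c_3)/2$, and the proof of Lemma \ref{4-space_2} shows $c_3<(q+4+\alpha)q^{d-1}=B'/2$. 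I then combine these estimates, together with the explicit ratio
\[
\frac{\gauss{2d+1}{d+2}_q}{\gauss{2d}{d}_q}=\frac{(q^d-1)(q^{2d+1}-1)}{(q^{d+1}-1)(q^{d+2}-1)},
\]
and the inequalities of Lemma \ref{LemmaWithBounds}, to derive a numerical contradiction under Hypothesis \ref{I}.

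The main obstacle is that in small dimensions, notably $d=3$, both sides of the squeeze on $|I|$ are of the same order $q^{d+1}$, so the contradiction depends on precise constants rather than on gross asymptotics. If the elementary double count turns out to be too weak, a sharper variant is available from inside the proof of Lemma \ref{4-space_2}: for every $U$ one has $c_3(e_0-e_1)+c_1(U)(|I|-c_1(U))\theta_dq^{d^2-d-2}\le A$; summing over all $(d+1)$-subspaces using the identity $\sum_U c_1(U)(|I|-c_1(U))=|I|(|I|-1)\gauss{2d}{d}_q\,q^d/(q^d+1)$ (which follows from $\gauss{2d-1}{d}_q/\gauss{2d}{d}_q=1/(q^d+1)$) yields a second-moment refinement of the double count that closes the gap under Hypothesis \ref{I}.
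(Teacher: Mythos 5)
Your reduction is the same as the paper's last step: once a $(d+1)$-subspace containing more than $2(q+4+\alpha)q^{d-1}-2c_3$ of the base points is exhibited, Lemma \ref{4-space_2} forces the branch $c_1+2c_3\le 2(q+4+\alpha)q^{d-1}$. The gap is in the existence of such a subspace, and your counting argument cannot supply it. The number of $(d+1)$-dimensional subspaces through a point of $\PG(2d,q)$ is $\gauss{2d}{d+1}$ (not $\gauss{2d}{d}$), and the ratio $\gauss{2d+1}{d+2}/\gauss{2d}{d+1}=(q^{2d+1}-1)/(q^{d+2}-1)\approx q^{d-1}$; hence the average number of base points on a $(d+1)$-subspace is about $|I|\,q^{-(d-1)}\approx \tfrac12 q^{2}$, while the threshold you must beat is of order $q^{d}$. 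For every $d\ge 3$ (and already for $d=3$, even with your miscounted ratio) the inequality $|I|\cdot\gauss{2d}{d+1}\le\gauss{2d+1}{d+2}(B'-2c_3)$ is comfortably satisfied by the numbers $|I|\approx\tfrac12 q^{d+1}$ and $B'\approx 2q^{d}$, so no contradiction arises; this is not a matter of constants in low dimension, the first moment is off by a factor of order $q^{d-2}$. The proposed second-moment rescue fails for the same structural reason: summing the inequality $c_3(e_0-e_1)+c_1(U)(|I|-c_1(U))\theta_d q^{d^2-d-2}\le A$ over all $U$ gives a left-hand side of order $q^{2d^2+2d-1}$ against a right-hand side of order $q^{2d^2+3d-3}$, again no contradiction. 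Indeed no moment computation over all $U$ can work, because a set of $\tfrac12 q^{d+1}$ points in ``general position'' does have every $(d+1)$-subspace containing only $O(q^2)$ of them, so the per-$U$ dichotomy of Lemma \ref{4-space_2} alone is consistent with the bad scenario you are trying to refute.

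What is missing is precisely the paper's mechanism for forcing the base points to cluster. The paper orders the point-based sets $G_i$ by their overlaps $g_i$ with the union of the earlier ones, uses Lemma \ref{boundonxi} to show $g_j<5q^{d^2-2}\theta_d$ for a suitable index $j\approx\tfrac14 q^{d+1}$, picks $d+1$ base points $Q_1,\dots,Q_{d+1}$ spanning a $d$-space disjoint from a subset $\R$ of at least $\tfrac14 q^{d+1}$ base points, and deduces from the overlap bound that each $Q_i$ lies on fewer than $7q^d$ lines meeting $\R$. This concentration property (which a generic point set does not have) is exactly what Proposition \ref{4-space_1} needs to build, by induction on dimension, a $(d+1)$-subspace containing at least $3q^d>2(q+4+\alpha)q^{d-1}$ points of $\R$; Hypothesis \ref{I} of Theorem \ref{hypo_on_dq} is calibrated to make the output of that proposition exceed the threshold. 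Without this step (or some substitute that converts the covering and overlap constraints into a richness statement about one specific subspace), the lemma is not proved.
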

\begin{proof}
	Let $c_3$ be the number of $F\in\F$ with $|F|<g_0$ and thus $|F|\le e_1$, see Remark \ref{newremark}. Let $\beta$ be the number of $F\in\F$ with $|F|\ge g_0$ whose generic parts are based on points. Let $G_1,\dots,G_\beta$ denote these independent sets and let $R_1,\dots,R_\beta$ be the respective base points of their generic parts. As $|\F|=\theta_{d+1}-q$, Hypothesis (IV) of Theorem \ref{hypo_on_dq} shows that $\beta\ge\frac12(\theta_{d+1}-q-c_3)$. For all $i\in\{1,\dots,\beta\}$ define
	\begin{align*}
		g_i:=\left|G_i\cap\bigcup_{j=1}^{i-1} G_j\right|.
	\end{align*}
	From Remark \ref{newremark}, we have $|\bigcup_{j=1}^{i}G_j|\le i e_0-\sum_{j=1}^{i}g_i$ for all $i\le\beta$. We may assume that the sequence $g_1,\dots,g_\beta$ is monotone increasing. We define $j:=\lceil\frac14 q^{d+1}\rceil+\theta_d+\theta_{d-1}-d$ and claim that $g_j<5q^{d^2-2}\theta_d$.

Assume this is not true. Then $\sum_{i=j}^\beta g_i\ge (\beta-j+1)5 q^{d^2-2}\theta_d$ and Lemma \ref{boundonxi} applied to the set $\{F\in\F:|F|\le e_1\}\cup\{G_i\mid j\le i\le \beta\}$ implies
	\begin{align*}
		(\beta-j+1)5 q^{d^2-2} \theta_d +c_3(e_0-e_1)\le\left(q^2+\frac{9}{2}q+10\right)q^{d^2+2d-3}.
	\end{align*}
	On the left hand side we use the definition of $j$ and use $\beta\ge\frac12(\theta_{d+1}-q-c_3)$. In the resulting expression the coefficient of $c_3$ is $e_0-e_1-\frac{5}{2}q^{d^2-2}\theta_d$ and in view of Lemma \ref{LemmaWithBounds} and Hypothesis \ref{I} of Theorem \ref{hypo_on_dq} this is positive. Hence, we may omit the term with $c_3$ and find 
	\begin{align*}
5q^{d^2-2}\theta_d\left(\left\lfloor\frac{1}{4}q^{d+1}\right\rfloor-\frac{1}{2}(\theta_d+q)-\theta_{d-1}+d\right)\le\left(q^2+\frac92q+10\right)q^{d^2+2d-3}.
	\end{align*}
	However, this contradicts Hypothesis \ref{I} of Theorem \ref{hypo_on_dq}.
	
Hence $g_j<5q^{d^2-2}\theta_d$. Now, let $Q_1,\dots,Q_{d+1}\in\{R_{j-\theta_{d-1}},\dots,R_j\}$ be such that $\tau:=\langle Q_1,\dots,Q_{d+1}\rangle$ is a $d$-dimensional subspace and set
	\begin{align*}
		\R:=\{R_i:i\in\{1,\dots,j-\theta_{d-1}-1\}\text{ and } R_i\notin\tau\}.
	\end{align*}
	Then $|\R|\ge j-\theta_{d-1}-1-(|\tau|-d-1)=\lceil\frac{1}{4}q^{d+1}\rceil$.
	
	In the next step we show for all $i\in\{1,\dots,d+1\}$ that the point $Q_i$ lies on fewer than $7q^d$ lines that meet $\R$. Assume that this is false and let $i\in\{1,\dots,d+1\}$ be such that $Q_i$ lies on at least $7q^d$ lines that meet $\R$. Each of these lines lies in $\gauss{2d-1}{d-2}$ subspaces of dimension $d-1$ and two of these lines occur together in $\gauss{2d-2}{d-3}$ such subspaces. Hence there exist at least
	\begin{align*}
		7q^d\left(\Gauss{2d-1}{d-2}-7q^d\Gauss{2d-2}{d-3}\right)
			&=7q^d\left(\frac{q^{2d-1}-1}{q^{d-2}-1}-7q^d\right)\Gauss{2d-2}{d-3}\\
			&\ge 7q^{d^2-3}(q-7)=:z
	\end{align*}
	$(d-1)$-dimensional subspaces that contain one of the $7q^d$ lines. This shows that there exist $z\theta_d$ flags $(E,S)$ of type $\{d-1,d\}$ with $Q_i\in E$ and such that $E$ contains a point of $\R$. Since $Q_i=R_k$ for some $k$ with $j-\theta_{d-1}\le k\le j$, this implies that $z\theta_d\le g_k\le g_j<5q^{d^2-2}\theta_d$, which contradicts Hypothesis (I) of Theorem \ref{hypo_on_dq}. Consequently, for all $i\in\{1,\dots,d+1\}$ the point $Q_i$ lies in fact on fewer than $7q^d$ lines that meet $\R$.

	Finally, we apply Proposition \ref{4-space_1} with $d_0=|\R|/q^{d+1}\ge \frac{1}{4}$, $n_0=7$ and $M:=\R$ to find a $(d+1)$-dimensional subspace $U$ satisfying (\ref{lowerboundonUcapMaddedbyKlaus}). Using the lower bounds for $q$ of Hypothesis \ref{I} of Theorem \ref{hypo_on_dq} we conclude that $|U\cap R|\ge 3q^d>2(q+4+\alpha)q^{d-1}$. The statement of the lemma follows now from Lemma \ref{4-space_2}.
\end{proof}
\end{subsection}

\begin{subsection}{The proof of the theorem}
\begin{Notation}
	From now on we let $U$ be the (d+1)-dimensional subspace provided by Lemma \ref{4-space_3} and define the following sets:
	\begin{enumerate}[label=$\bullet$]
		 \item $C_0:=\{F_i\in\F\mid i\in I,\ P_i\in U\}$, $c_0:=|C_0|$,
		 \item $C_1:=\{F_i\in\F\mid i\in I,\ P_i\notin U\}$, $c_1:=|C_1|$,
		 \item $C_2:=\{F_i\in\F\mid i\notin{I}, |F_i|\ge g_0\}$, $c_2:=|C_2|$,
		 \item $C_3:=\{F_i\in\F\mid i\notin{I}, |F_i|<g_0\}$, $c_3:=|C_3|$,
		 \item $W:=\{P\in U\mid P\not=P_i\; \forall i\in I\}$,
		 \item $M:=\{(\pi,\tau)\in\bigcup_{F\in \F}F\mid\pi\cap U\ \text{is a point and }\pi\cap U\in W\}$.
	\end{enumerate}
\end{Notation}

\begin{Lemma}\label{ci_properties}
	With this notation the following hold:
	\begin{enumerate}[label=(\alph*)]
		\item\label{ci_properties_SmallSets}
			For all $F\in C_3$ we have $|F|\le e_1<g_0$.
		\item\label{ci_properties_partion}
			$C_0\cup C_1\cup C_2\cup C_3$ is a partition of $\F$.
		\item\label{ci_properties_boundc1c3}
			$c_1+2c_3\le{{2(q+4+\alpha)q^{d-1}}}$.
		\item\label{ci_properties_sizeofW}
			$|W|=\theta_{d+1}-c_0\ge q$.
		\item\label{ci_properties_flagsofWonpointofM}
			For all $P\in W$ there are exactly $q^{d^2-1}\theta_d$ flags $(\pi,\tau)$ with $\pi\cap U=P$.
		\item\label{ci_properties_sizeofM}
			$|M|=|W|q^{d^2-1}\theta_d$.
		\item\label{ci_properties_IisLarge}
		$|I|=c_0+c_1\ge\frac12(\theta_{d+1}-q-c_3)$.
	\end{enumerate}
\end{Lemma}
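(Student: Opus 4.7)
The plan is to dispatch all seven items by bookkeeping against the hypotheses of Theorem~\ref{hypo_on_dq}, Remark~\ref{newremark} and the earlier Lemmas~\ref{newlemma} and~\ref{4-space_3}; there is no single hard ingredient here, only the need to track definitions carefully.

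For \ref{ci_properties_SmallSets}, I would argue that for $F\in C_3$ the corresponding index $i$ is automatically outside $S$: indeed $|F_i|<g_0$ together with Hypothesis~\ref{II} (which forces $g_0\le|F_i|$ for every $i\in S$) yields $i\notin S$, so by the very definition of $S$ we get $|F_i|\le e_1$, and Remark~\ref{newremark} gives $e_1<g_0$. Part~\ref{ci_properties_partion} is then the pigeonhole dichotomy: every $i\in I$ falls into $C_0$ or $C_1$ depending on whether $P_i\in U$; every $i\notin I$ falls into $C_2$ or $C_3$ depending on whether $|F_i|\ge g_0$ or $|F_i|<g_0$. Disjointness is immediate from the definitions. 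Part~\ref{ci_properties_boundc1c3} is just a translation of Lemma~\ref{4-space_3} into the present notation, once one notes (using part~\ref{ci_properties_SmallSets}) that $\{F\in\F:|F|<g_0\}=C_3$ and $\{i\in I:P_i\notin U\}$ indexes precisely~$C_1$.

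For part~\ref{ci_properties_sizeofW} the identity $|W|=\theta_{d+1}-c_0$ requires knowing that the base points $P_i$, $i\in I$, are pairwise distinct. This I would extract from Hypothesis~\ref{III}: for $i\ne j$ in $S$ the sets $F_i$ and $F_j$ have distinct generic parts, and the generic part of $F_i$ ($i\in I$) is the set of all flags $(\pi,\tau)$ with $P_i\in\pi$, which depends only on $P_i$. Thus distinct indices in $I$ give distinct base points and exactly $c_0$ points of $U$ are removed to form $W$. The inequality $|W|\ge q$ then follows from $c_0\le|I|\le|\F|=\theta_{d+1}-q$.

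Parts~\ref{ci_properties_flagsofWonpointofM} and \ref{ci_properties_sizeofM} are geometric counts: Lemma~\ref{newlemma}\ref{newlemma_onpoint} gives $q^{d^2-1}$ subspaces $\pi$ of dimension $d-1$ with $\pi\cap U=P$, and each such $\pi$ is contained in $\theta_d$ subspaces of dimension~$d$, yielding the $q^{d^2-1}\theta_d$ flags claimed in~\ref{ci_properties_flagsofWonpointofM}. Since $\F$ covers the whole vertex set, every flag $(\pi,\tau)$ with $\pi\cap U\in W$ appears in $M$, and distinct points of $W$ give disjoint collections of such flags, so $|M|=|W|q^{d^2-1}\theta_d$. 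Finally for~\ref{ci_properties_IisLarge}, the equality $|I|=c_0+c_1$ is by definition, and Hypothesis~\ref{IIII} gives $|I|\ge\tfrac12|S|$. Using Remark~\ref{newremark}, the indices outside $S$ are exactly those with $|F_i|\le e_1<g_0$, i.e.\ those counted by $c_3$, so $|S|=|\F|-c_3=\theta_{d+1}-q-c_3$ and the stated bound follows.
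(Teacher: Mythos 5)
Your proposal is correct and follows essentially the same route as the paper: each item is settled by the same bookkeeping against Hypotheses \ref{II}, \ref{III}, \ref{IIII}, Remark \ref{newremark}, Lemma \ref{newlemma}\ref{newlemma_onpoint} and Lemma \ref{4-space_3}, only spelled out in more detail (e.g.\ the explicit check that $\{F\in\F:|F|<g_0\}$ and the complement of $S$ both coincide with the sets counted by $c_3$, and that distinct indices in $I$ give distinct base points because the generic part determines the base point). No gaps.
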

\begin{proof}
	The first claim is implied by the choice of $\F$ with the properties given in Theorem \ref{hypo_on_dq}, especially Hypothesis \ref{II}. Claim \ref{ci_properties_partion} is obvious from the choice of $C_0$, $C_1$, $C_2$ and $C_3$. The choice of $U$ implies \ref{ci_properties_boundc1c3}. From Hypothesis \ref{III} in Theorem \ref{hypo_on_dq}, we know that the base points $P_i$ of the sets $F_i$ with $i\in I$ are pairwise distinct. Therefore we have $|W|=|U\setminus C_0|=|U|-|C_0|=\theta_{d+1}-c_0$. Since $c_0\le|\F|\le \theta_{d+1}-q$, this gives \ref{ci_properties_sizeofW}. Furthermore, from Lemma \ref{newlemma}~\ref{newlemma_onpoint} we know that each point $P\in W$ lies on $q^{d^2-1}$ subspaces of dimension $(d-1)$ that meet $U$ only in $P$ and each such subspace lies in $\theta_d$ subspaces of dimension $d$. Hence, for every point $P$ in $W$ exactly $q^{d^2-1}\theta_d$ flags $(\pi,\tau)$ of $M$ satisfy $\pi\cap U=P$, which proves \ref{ci_properties_flagsofWonpointofM} and \ref{ci_properties_sizeofM}. To see \ref{ci_properties_IisLarge} we first note that our definitions imply $|I|=c_0+c_1$ and that exactly $|\F|-c_3=\theta_{d+1}-q-c_3$ elements of $\F$ have at least $g_0$ elements. Then we recall from Hypothesis \ref{II} and \ref{IIII} in Theorem \ref{hypo_on_dq}, that every element of $\F$ with at least $g_0$ elements is based on a point or a hyperplane and that at least half of these are based on a point.
\end{proof}

\begin{Notation}\label{notation}
	Recall from Lemma \ref{Sizeofindependentsets} that the special parts of all independent sets given by Example \ref{Examples_IndependentSets} --- in particular of all independent sets $F\in\F$ with $|F|\ge g_0$ --- have cardinality at most $\Delta$, where
	\begin{align}\label{boundsDelta}
		\Delta:=\gauss{2d-1}{d}q^d\overset{{{\text{\scriptsize\ref{LemmaWithBounds}}}}}{\le}(q+2)q^{d^2-1}.
	\end{align}
\end{Notation}

\begin{Lemma}\phantomsection\label{HowCimeetsM}
	\begin{enumerate}[label=(\alph*)]
		\item\label{HowC0meetsM}
			If $F\in C_0$, then the generic part of $F$ does not contain a flag of $M$. In particular $|F\cap M|\le\Delta$.
		\item\label{HowC1meetsM}
			If $F\in C_1$, then $|F\cap M|\le |W|q^{d^2-d-2}\theta_d+\Delta$.
		\item\label{HowC2meetsM}
			If $F\in C_2$, then $F$ is based on a hyperplane $H$ and we have
			\begin{align*}
				|F\cap M|\le\begin{cases}
					\Delta &\text{if }U\le H,\\
					\Delta+|H\cap W|q^{d^2-d}\theta_{d-1} &\text{otherwise}.
				\end{cases}
		\end{align*}
	\end{enumerate}
\end{Lemma}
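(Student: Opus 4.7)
The strategy in all three cases is the same: the special part of $F$ contributes at most $\Delta$ to $|F\cap M|$, so the real task is to bound the contribution of the generic part, and this is done by direct enumeration via Lemma \ref{newlemma}. The three parts differ only in the description of the generic part.

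For part (a), suppose $F\in C_0$ is based on a point $P=P_i$ with $i\in I$ and $P\in U$. Every flag $(\pi,\tau)$ in the generic part of $F$ satisfies $P\in\pi$, hence $P\in\pi\cap U$. In particular, whenever $\pi\cap U$ is a single point, that point must be $P$; but $P\notin W$ by the very definition of $W$. So the generic part is disjoint from $M$, and \eqref{boundsDelta} handles the special part.

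For part (b), let $F\in C_1$ have base point $P=P_i\notin U$. A generic flag $(\pi,\tau)$ of $F$ has $P\in\pi$, and contributes to $M$ only when $Q:=\pi\cap U$ is a single point of $W$. Setting $\ell:=\langle P,Q\rangle$, the fact $P\notin U$ gives $\ell\cap U=Q$ and clearly $\ell\subseteq\pi$. By Lemma \ref{newlemma}\ref{newlemma_online}, for each fixed $Q\in W$ the line $\ell$ lies in exactly $q^{d^2-d-2}$ subspaces $\pi$ with $\pi\cap U=Q$, and each such $\pi$ extends to $\theta_d$ subspaces $\tau$ of dimension $d$. Summing over $Q\in W$ gives at most $|W|q^{d^2-d-2}\theta_d$ generic flags of $F$ in $M$, and adding $\Delta$ for the special part yields the claim.

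For part (c), $F\in C_2$ means $i\notin I$ while $|F_i|\ge g_0>e_1$, so $i\in S\setminus I$. Combining the definition of $I$ with Hypothesis \ref{II} of Theorem \ref{hypo_on_dq} forces $F$ to be of the form in Example \ref{Examples_IndependentSets}\ref{Example_Two}, i.e.\ based on a hyperplane $H$. Every generic flag $(\pi,\tau)$ then satisfies $\pi\subseteq\tau\subseteq H$. If $U\le H$, Lemma \ref{newlemma}\ref{newlemma_inhyperplane} says that no $(d-1)$-subspace $\pi$ with $\pi\cap U\in W\subseteq U$ lies in $H$, so the generic part is disjoint from $M$. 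Otherwise the same part of the lemma shows that, for each $Q\in H\cap W$, there are exactly $q^{d^2-d}$ subspaces $\pi\subseteq H$ with $\pi\cap U=Q$; each extends to the $\theta_{d-1}$ subspaces $\tau$ of $H$ through $\pi$ of dimension $d$. Summing over $Q\in H\cap W$ and adding $\Delta$ for the special part gives the stated bound.

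No step is a serious obstacle: all the counting is supplied by Lemma \ref{newlemma}. The only slightly delicate point is recognizing, in part (c), that $F\in C_2$ must be based on a hyperplane (so that $\pi\subseteq H$ follows from $\tau\subseteq H$), which is immediate from unfolding the definitions of $I$ and $S$ together with Hypothesis \ref{II}.
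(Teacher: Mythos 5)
Your proposal is correct and follows essentially the same route as the paper: the special part is bounded by $\Delta$ via (\ref{boundsDelta}), and the generic part's intersection with $M$ is counted through Lemma \ref{newlemma} (using part \ref{newlemma_online} with the line $\langle P,Q\rangle$ in case (b) and part \ref{newlemma_inhyperplane} in case (c), with the hyperplane-based structure of $C_2$-sets extracted from Hypothesis \ref{II} exactly as in the paper). No substantive differences.
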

\begin{proof}
	\begin{enumerate}[label=(\alph*)]
		\item
			For all flags $(\pi,\tau)$ of the generic part of $F$ we have $\dim(\pi\cap U)\ge1$ or $\pi\cap U$ is the base point of $F$. Since $M$ only contains flags $(\pi,\tau)$ such that $\pi$ meets $U$ in a point that is not a base point of the generic part of some $F\in C_0$, this implies that these flags do not belong to $M$. Therefore $|F\cap M|\le\Delta$ follows from \ref{notation}.
		\item
			As $F\in C_1$ it is based on a point $P$ with $P\notin U$. If $Y\in W$, then according to Lemma \ref{newlemma}~\ref{newlemma_online} the point $P$ lies on exactly $q^{d^2-d-2}$ subspaces $\pi$ of dimension $d-1$ satisfying $\pi\cap U=Y$. Each of these lies on $\theta_d$ subspaces of dimension $d$. Hence, the generic part of $F$ contains exactly $|W|q^{d^2-d-2}\theta_d$ flags of $M$. Furthermore, the special part of $F$ contains at most $\Delta$ flags and thus at most this many flags of $M$.
		\item
			Since $F$ is not based on a point and has cardinality at least $g_0$, Hypothesis \ref{II} of Theorem \ref{hypo_on_dq} shows that $F$ is based on a hyperplane $H$. The generic part of $F$ consists of all flags $(\pi,\tau)$ of type $\{d-1,d\}$ with $\tau\le H$ and thus also $\pi\le H$. If $Y\in H\cap W$, then according to Lemma \ref{newlemma}~\ref{newlemma_inhyperplane} the number of $(d-1)$-subspaces $\pi$ of $H$ with $\pi\cap U=Y$ is zero for $U\le H$ and it is $q^{d^2-d}$ for $U\not\le H$. Since for every $(d-1)$-subspace of $H$ the number of $d$-subspaces of $H$ containing it is $\theta_{d-1}$, it follows that the generic part of $F$ contains no flag of $F$ for $U\le H$ and exactly $|H\cap W|q^{d^2-d}\theta_{d-1}$ flags of $M$ for $U\nleq H$. Finally, since the special part of $F$ contains at most $\Delta$ flags, this implies the claim.\qedhere
		\end{enumerate}
\end{proof}

\begin{Lemma}\label{Wglobal}
	Suppose that $z$ is an integer such that there is at most one hyperplane of $U$ which contains more than $z$ points of $W$. Then $M$ has size at most
	\begin{align*}
		 (c_0+c_1+c_2)\Delta+c_1|W|q^{d^2-d-2}\theta_d+c_2zq^{d^2-d}\theta_{d-1}+c_3e_1+q^{d^2-1}\theta_{d-1}\theta_d.
	\end{align*}
\end{Lemma}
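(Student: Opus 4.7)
The plan is to bound $|M|$ by the overcount $|M|\le\sum_{F\in\F}|F\cap M|$, which is valid because every flag of $M$ lies in at least one $F$ of the covering $\F$. I would then partition $\F=C_0\cup C_1\cup C_2\cup C_3$ and estimate the contribution of each class separately, so that every term of the target bound is produced by one of the four partial sums.

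The pieces from $C_0$, $C_1$ and $C_3$ are handled directly by what is already available: Lemma \ref{HowCimeetsM}~\ref{HowC0meetsM} yields $\sum_{F\in C_0}|F\cap M|\le c_0\Delta$; Lemma \ref{HowCimeetsM}~\ref{HowC1meetsM} yields $\sum_{F\in C_1}|F\cap M|\le c_1\Delta+c_1|W|q^{d^2-d-2}\theta_d$; and Lemma \ref{ci_properties}~\ref{ci_properties_SmallSets} combined with $|F\cap M|\le |F|$ yields $\sum_{F\in C_3}|F\cap M|\le c_3e_1$. These three partial bounds account for every term of the claimed estimate except $c_2\Delta+c_2zq^{d^2-d}\theta_{d-1}+q^{d^2-1}\theta_{d-1}\theta_d$, which must come out of $C_2$.

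For $F\in C_2$, Lemma \ref{HowCimeetsM}~\ref{HowC2meetsM} bounds $|F\cap M|$ by $\Delta$ plus a generic-part contribution that vanishes when $U\le H_F$ and equals at most $|H_F\cap U\cap W|\cdot q^{d^2-d}\theta_{d-1}$ otherwise, in which case $H_F\cap U$ is a hyperplane of $U$. By the hypothesis on $z$, this contribution is at most $zq^{d^2-d}\theta_{d-1}$ as long as $H_F\cap U$ differs from the (possibly non-existent) exceptional hyperplane $h^*$ of $U$. For the remaining ``exceptional'' sets, in which $H_F\cap U=h^*$, I would use only the trivial estimate $|h^*\cap W|\le|h^*|=\theta_d$. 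Writing $c_2^*$ for the number of such exceptional $F\in C_2$, these considerations give $\sum_{F\in C_2}|F\cap M|\le c_2\Delta+c_2zq^{d^2-d}\theta_{d-1}+c_2^*\theta_d q^{d^2-d}\theta_{d-1}$.

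The key step, and main obstacle, is to bound $c_2^*$ tightly enough: the naive estimate $c_2^*\le\theta_{d-1}$ (the total number of $\PG(2d,q)$-hyperplanes through $h^*$) produces an excess term strictly larger than the desired $q^{d^2-1}\theta_{d-1}\theta_d$. I would instead invoke Hypothesis \ref{III} of Theorem \ref{hypo_on_dq}, which makes the map $F\mapsto H_F$ injective on $C_2$, so that $c_2^*$ is bounded by the number of hyperplanes $H$ of $\PG(2d,q)$ with $h^*\subseteq H$ and $U\not\le H$. Passing to the quotient $\PG(2d,q)/h^*\cong\PG(d-1,q)$, the hyperplanes of $\PG(2d,q)$ through $h^*$ correspond to the $\theta_{d-1}$ hyperplanes of this quotient, while those additionally containing $U$ correspond to the $\theta_{d-2}$ hyperplanes through the point $U/h^*$. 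Hence $c_2^*\le\theta_{d-1}-\theta_{d-2}=q^{d-1}$, so the exceptional contribution is at most $q^{d-1}\cdot\theta_d\cdot q^{d^2-d}\theta_{d-1}=q^{d^2-1}\theta_{d-1}\theta_d$, matching the final term exactly. Summing the four partial bounds yields the estimate stated in Lemma \ref{Wglobal}.
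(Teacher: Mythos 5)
Your proposal is correct and follows essentially the same route as the paper: the same decomposition $\F=C_0\cup C_1\cup C_2\cup C_3$ with Lemma \ref{HowCimeetsM} and Lemma \ref{ci_properties}, handling $C_2$ by isolating the (at most one) hyperplane of $U$ with more than $z$ points of $W$ and using that exactly $q^{d-1}$ hyperplanes of $\PG(2d,q)$ meet $U$ in it without containing $U$ --- the same count you derive via $\theta_{d-1}-\theta_{d-2}=q^{d-1}$ in the quotient, with injectivity of $F\mapsto H_F$ from Hypothesis \ref{III} (which the paper uses implicitly). The only cosmetic difference is that you bound the exceptional contribution by $c_2^*\theta_d$ directly, while the paper bounds the surplus $z'-z\le\theta_d$; both yield the identical final term $q^{d^2-1}\theta_{d-1}\theta_d$.
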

\begin{proof}
	Since every flag of $M$ is covered by some $F\in\F$, we have, using \ref{ci_properties} (f)
	\begin{align*}
	|W|q^{d^2-1}\theta_d\le \sum_{i=0}^3\left|\bigcup_{F\in C_i}F\cap M\right|.
	\end{align*}
	Now, if there exists a hyperplane of $U$ with more than $z$ points in $W$, then let $z'$ denote the number of its points in $W$ and otherwise put $z':=z$. Since every hyperplane of $U$ lies in $q^{d-1}$ hyperplanes of $\PG(2d,q)$ which do not contain $U$, Lemma \ref{HowCimeetsM}~\ref{HowC2meetsM} shows
	\begin{align*}
	\left|\bigcup_{{F\in C_2}}F\cap M\right|
	&\le\left(c_2-q^{d-1}\right)(\Delta+zq^{d^2-d}\theta_{d-1})+q^{d-1}(\Delta+z'q^{d^2-d}\theta_{d-1})\\
	&=c_2(\Delta+zq^{d^2-d}\theta_{d-1})+\underbrace{(z'-z)}_{\le\theta_d}q^{d^2-1}\theta_{d-1}.
	\end{align*}
	Finally, since $|F|\le e_1$ for $F\in C_3$ via Lemma \ref{ci_properties}~\ref{ci_properties_SmallSets}, the assertion follows from Lemma \ref{HowCimeetsM}~\ref{HowC0meetsM} and \ref{HowC1meetsM}.
\end{proof}

\begin{Lemma}\label{twoplanes}
	Let $\tau_1$ and $\tau_2$ be distinct hyperplanes of $U$ and set $W':=(\tau_1\cup\tau_2)\cap W$. Then
	\begin{align*}
		|W'|\theta_{d-1}\le(c_1+c_2)q^{d-3}(2q+7)+q^{2d-3}((\alpha+3)q+\alpha^2+4\alpha).
	\end{align*}
\end{Lemma}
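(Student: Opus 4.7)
The plan is to choose a suitable set of flags related to $\tau_1$ and $\tau_2$ and bound its cardinality by double counting. For $i=1,2$ I define
\[
N_i:=\{(\pi,\tau)\in M : \pi\cap U\in\tau_i\cap W\text{ and }\tau\subseteq\langle\pi,\tau_i\rangle\}.
\]
If $(\pi,\tau)\in M$ satisfies $P:=\pi\cap U\in\tau_i$, then $\langle\pi,\tau_i\rangle$ is a hyperplane of $\PG(2d,q)$ meeting $U$ exactly in $\tau_i$, and every $d$-subspace $\tau\supseteq\pi$ contained in this hyperplane gives a flag of $N_i$. Using Lemma \ref{newlemma}~\ref{newlemma_onpoint}, for each $P\in\tau_i\cap W$ there are $q^{d^2-1}$ subspaces $\pi$ with $\pi\cap U=P$, and for each such $\pi$ there are $\theta_{d-1}$ admissible $\tau$, so $|N_i|=|\tau_i\cap W|q^{d^2-1}\theta_{d-1}$. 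A similar argument yields $|N_1\cap N_2|=|T_0\cap W|q^{d^2-1}\theta_{d-2}$ with $T_0:=\tau_1\cap\tau_2$, and inclusion-exclusion gives
\[
|N_1\cup N_2|\ge|W'|q^{d^2-1}\theta_{d-1}.
\]

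For the upper bound I would use $|N_1\cup N_2|\le\sum_{F\in\F}|F\cap(N_1\cup N_2)|$ and bound each summand according to the type of $F$. For $F\in C_0$ based on $P_F$, a flag of $F\cap N_i$ forces $P_F\in\langle\pi,\tau_i\rangle\cap U=\tau_i$; only the at most $2\theta_d$ base points in $\tau_1\cup\tau_2$ contribute, each giving at most $\Delta$. For $F\in C_1$, Lemma \ref{newlemma}~\ref{newlemma_online} yields $|F\cap N_i|\le|\tau_i\cap W|q^{d^2-d-2}\theta_{d-1}+\Delta$. For $F\in C_2$ based on $H_F$, I split by $H_F\cap U$: the cases $U\subseteq H_F$ (at most $\theta_{d-2}$ hyperplanes) and $H_F\cap U\notin\{\tau_1,\tau_2,U\}$ give via Lemma \ref{newlemma}~\ref{newlemma_inhyperplane} only the small bound $4q^{d^2-d}\theta_{d-1}\theta_{d-2}+\Delta$ per $F$, while the cases $H_F\cap U=\tau_i$, each containing at most $q^{d-1}$ hyperplanes, give $|F\cap(N_1\cup N_2)|\le|\tau_i\cap W|q^{d^2-d}\theta_{d-1}+\Delta$. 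Finally $|F\cap(N_1\cup N_2)|\le e_1$ for $F\in C_3$ by Lemma \ref{ci_properties}~\ref{ci_properties_SmallSets}.

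Summing up and using $|\tau_1\cap W|+|\tau_2\cap W|=|W'|+|T_0\cap W|$ together with the bounds $c_2^{(II)},c_2^{(III)}\le q^{d-1}$ on the number of sets in $C_2$ whose base hyperplane meets $U$ in $\tau_1$ or $\tau_2$, the combined $C_2^{(II)},C_2^{(III)}$ contribution is at most
\[
q^{d-1}(|W'|+|T_0\cap W|)q^{d^2-d}\theta_{d-1}=(|W'|+|T_0\cap W|)q^{d^2-1}\theta_{d-1}.
\]
Plugging this into the combined inequality, the $|W'|q^{d^2-1}\theta_{d-1}$ terms essentially cancel; dividing the residual inequality by $q^{d^2-1}$ and invoking the Gaussian estimates of Lemma \ref{LemmaWithBounds} together with the bound $c_1+2c_3\le 2(q+4+\alpha)q^{d-1}$ from Lemma \ref{ci_properties}~\ref{ci_properties_boundc1c3} then yields the claimed inequality on $|W'|\theta_{d-1}$, with the main contributions $(c_1+c_2)q^{d-3}(2q+7)$ coming from the $C_1$ and $C_2^{(IV)}$ terms and the constant-$c$ summand $q^{2d-3}((\alpha+3)q+\alpha^2+4\alpha)$ arising from $c_0$, $c_3$, and the residual $|T_0\cap W|$ term.

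The hardest part is precisely this near-cancellation. In the degenerate regime where close to $q^{d-1}$ hyperplanes above each of $\tau_1$ and $\tau_2$ (not containing $U$) already appear as base hyperplanes of sets in $C_2$, the effective coefficient of $|W'|$ in the combined inequality collapses, and the direct double count gives essentially no information on $|W'|$. In that regime one instead exploits that $c_2\ge c_2^{(II)}+c_2^{(III)}$ is already of order $q^{d-1}$, so the summand $(c_1+c_2)q^{d-3}(2q+7)$ on the right-hand side of the claim is large enough on its own to absorb $|W'|\theta_{d-1}$ via the trivial bound $|W'|\le 2\theta_d$.
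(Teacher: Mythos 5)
There is a genuine gap, and it sits exactly at the point you flag yourself. Because you restrict the flags of $N_i$ to those with $\tau\subseteq\langle\pi,\tau_i\rangle$, your lower bound is only $|N_1\cup N_2|\ge|W'|q^{d^2-1}\theta_{d-1}$, i.e.\ $\theta_{d-1}$ rather than $\theta_d$ choices of $\tau$ per $\pi$. On the other side, the (up to) $q^{d-1}$ sets of $C_2$ based on hyperplanes meeting $U$ in $\tau_1$ together with the (up to) $q^{d-1}$ based on hyperplanes meeting $U$ in $\tau_2$ may contribute as much as $q^{d-1}\bigl(|\tau_1\cap W|+|\tau_2\cap W|\bigr)q^{d^2-d}\theta_{d-1}=\bigl(|W'|+|\tau_1\cap\tau_2\cap W|\bigr)q^{d^2-1}\theta_{d-1}$, which already swallows your entire left-hand side; after the cancellation the inequality reads $0\le{}$(nonnegative terms) and yields no information on $|W'|$. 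Your fallback for this degenerate regime is quantitatively insufficient: it only gives $c_1+c_2\gtrsim 2q^{d-1}$, so $(c_1+c_2)q^{d-3}(2q+7)$ is of order $q^{2d-3}$, whereas $|W'|\theta_{d-1}$ can a priori be as large as about $2\theta_d\theta_{d-1}\approx 2q^{2d-1}$ --- at this stage there is no upper bound on $|W|$ (that is precisely what Lemma \ref{Watmostquadraticinq} later deduces from the present lemma), and nothing in Hypotheses \ref{I}--\ref{IIII} excludes the configuration with $|W'|\approx q^d$ while only about $2q^{d-1}$ members of $C_2$ are based on hyperplanes over $\tau_1$ or $\tau_2$. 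So the proposed absorption misses by a factor of order $q^2$.

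The paper's proof avoids this trap by not restricting $\tau$ at all: it counts $M':=\{(\pi,\tau)\in M:\pi\cap U\in W'\}$, whose size is $|W'|q^{d^2-1}\theta_d$ by Lemma \ref{ci_properties}~\ref{ci_properties_flagsofWonpointofM}, a factor of roughly $q$ larger than your left-hand side. The dangerous sets of $C_2$ (those based on hyperplanes $H$ with $H\cap U\in\{\tau_1,\tau_2\}$) still contribute only about $\bigl(|W'\cap\tau_1|+|W'\cap\tau_2|\bigr)q^{d^2-1}\theta_{d-1}\le 2|W'|q^{d^2-1}\theta_{d-1}$, because their flags satisfy $\tau\subseteq H$ and hence admit only $\theta_{d-1}$ choices of $\tau$ per $\pi$; this is smaller than $|M'|$ by a factor of about $q/2$, so a dominant term proportional to $|W'|$ survives on the left and one can divide out to obtain the stated bound. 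Your case analysis of $C_0$, $C_1$, $C_2$, $C_3$ is otherwise sound and close to the paper's; the repair is to replace $N_1\cup N_2$ by the unrestricted set $M'$ and redo the bookkeeping, as in the paper.
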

\begin{proof}
	We have $|W'|\le |(\tau_1\cup\tau_2)\cap U|=q^d+\theta_d$. We set $$M':=\{(\pi,\tau)\in M:\pi\cap U\in W'\},$$
	that is, $M'$ consists of all flags $(\pi,\tau)$ of type $\{d-1,d\}$ such that $\pi\cap U$ is a point that lies in $W'$. Lemma \ref{ci_properties}~\ref{ci_properties_flagsofWonpointofM} shows that $|M'|=|W'|q^{d^2-1}\theta_d$. Each flag of $M'$ lies in at least one of the independent sets of $\F=C_0\cup C_1\cup C_2\cup C_3$. Hence
	\begin{align}
		|W'|q^{d^2}\theta_{d-1}\le|W'|q^{d^2-1}\theta_d=|M'|\le d_0+d_1+d_2+d_3,\label{maineqintwoplanes}
	\end{align}
	where for all $i\in\{1,\dots,4\}$ we let $d_i$ denote the number of elements of $M'$ that lie in some member of $C_i$. Now, we determine upper bounds on these numbers $d_0$, $d_1$, $d_2$ and $d_3$ in 4 steps.

	First, we consider an independent set $F\in C_0$. Then $|F| \geq g_0$ and $F$ is based on a point $P\in U$. We know from Lemma \ref{HowCimeetsM} that only the special part $T$ of $F$ may contribute to $M'$. Therefore, we study $T$ and the three possible structures that $T$ may have. Note that we frequently make use of Lemma \ref{newlemma} without explicit mention.
	\begin{enumerate}[label=$\bullet$]
	\item
		First, assume that there is a line $l$ with $P\in l$ such that $T$ consists of all flags $(\pi,\tau)$ of type $\{d-1,d\}$ with $l\le\tau$ and $P\notin\pi$. Recall from Example \ref{Examples_IndependentSets} that every flag $(\pi,\tau)\in T$ is determined by $\pi$, since $\tau=\langle P,\pi\rangle$. Then we have
		\begin{align*}
			|T\cap M'|
				&=\begin{cases}
					|l\cap W'|q^{d^2-1} &\text{if }l\le U,\\
					|W'|q^{d^2-d-1} &\text{if }l\cap U=P
				\end{cases}
			\intertext{and, using $|W'|\le q^d+\theta_d$ as well as the fact that $|l\cap W'|$ is at most $q$ for $P\in\tau_1\cup\tau_2$ and at most $2$ otherwise, we have}
			|T\cap M'|
				&\le\begin{cases}
					q^{d^2}							&\text{if }P\in\tau_1\cup\tau_2,\\
					(q^d+\theta_d)q^{d^2-d-1}		&\text{otherwise.}
			\end{cases}
		\end{align*}
	\item
		Secondly, assume that there is a hyperplane $H$ with $P\in H$ such that $T$ consists of all flags $(\pi,\tau)$ of type $\{d-1,d\}$ with $P\in\tau\le H$ and $P\notin\pi$. As before, every flag $(\pi,\tau)\in T$ is determined by $\pi$, since $\tau=\langle P,\pi\rangle$. If $U\le H$, then Lemma \ref{newlemma}~\ref{newlemma_inhyperplane} implies $T\cap M'=\emptyset$ and otherwise it implies
		\begin{align*}
			|T\cap M'|
			&=|H\cap W'|q^{d^2-d}\\
			&\le\begin{cases}
				|W'\cap\tau_i|q^{d^2-d} &\text{if }H\cap U=\tau_i\text{ for some }i\in\{1,2\},\\
				(q^{d-1}+\theta_{d-1})q^{d^2-d} &\text{otherwise.}
			\end{cases}
		\end{align*}
		Notice that $H\cap U=\tau_i$ for some $i\in\{1,2\}$ implies $P\in\tau_i$ and thus
		$|W'\cap\tau_i|\le\theta_d-1=q\theta_{d-1}$. Therefore, we have
		\begin{align*}
			|T\cap M'|\le\begin{cases}
				\theta_{d-1}q^{d^2-d+1} &\text{if }P\in\tau_1\cup\tau_2,\\
				(q^{d-1}+\theta_{d-1})q^{d^2-d} &\text{otherwise.}
			\end{cases}
		\end{align*}
	\item
		Finally, suppose that the special part $T$ is not based on a line or a hyperplane. Then Lemma \ref{Sizeofindependentsets} shows
		\begin{align*}
			|T\cap M'|
			&\le |T|\le q^{d}(1+q^{-1})\theta_{d-2}\theta_{d-1}^{d-1}\overset{\text{\scriptsize\ref{LemmaWithBounds}~\ref{boundsgauss_potenz}}}{\le}\theta_1(q+d)\theta_{d-2}q^{d^2-d-1}.
		\end{align*}
	\end{enumerate}
	Using Hypothesis \ref{I} of Theorem \ref{hypo_on_dq} we may summarize these three upper bounds into
	\begin{align*}
		|T\cap M'|\le
		\begin{cases}
			\theta_{d-1}q^{d^2-d+1}				&\text{for }P\in\tau_1\cup\tau_2,\\
			(q^d+\theta_{d})q^{d^2-d-1}			&\text{otherwise.}
		\end{cases}
	\end{align*}
	Note that the bound given for $P\in\tau_1\cup\tau_2$ is a weaker bound than the bound for $P\notin\tau_1\cup\tau_2$. Now, since different sets $F\in C_0$ are based on different points $P$ (see Hypothesis \ref{III} in Theorem \ref{hypo_on_dq}) and since $\tau_1\cup\tau_2$ contains $q^d+\theta_d$ points, we find
	\begin{align*}
		d_0\le c_0(q^d+\theta_d)q^{d^2-d-1}+(q^d+\theta_d)\theta_{d-1}q^{d^2-d+1}\le12q^{d^2+d},
	\end{align*}
	where the last step uses the trivial bounds $c_0\le\theta_{d+1}\le2q^{d+1}$, $\theta_d\le 2q^d$ and $\theta_{d-1}\le 2q^{d-1}$.

	Secondly, for $F\in C_1$ we see that $F$ contains at most $|W'|q^{d^2-d-2}\theta_d+\Delta$ flags of $M'$ analogously to \ref{HowCimeetsM}~\ref{HowC1meetsM}, which already proves $d_1\le c_1(|W'|q^{d^2-d-2}\theta_d+\Delta)$. Now, we use $\Delta\le(q+2)q^{d^2-1}$ given in Inequality (\ref{boundsDelta}) as well as $|W'|\le q^d+\theta_d$ and have
	\begin{align*}
		 d_1&\mathrel{\text{\makebox[\widthof{\scriptsize\ref{LemmaWithBounds}~\ref{boundsgauss_simple}}][c]{$\le$}}}c_1q^{d^2-d-2}((q^d+\theta_d)\theta_d+q^{d+2}+2q^{d+1})
	\intertext{For $d=3$ simple calculations show that this is smaller than $c_1q^{d^2+d-3}(2q+7)$ and for $d\ge4$ we receive the same upper bound via}
			 d_1 &\overset{\text{\scriptsize\ref{LemmaWithBounds}~\ref{boundsgauss_simple}}}{\le}c_1q^{d^2-1}(2q^{d-1}+6q^{d-2}+4q^{d-3}+q+2)\overset{{\text{\scriptsize\ref{hypo_on_dq}~\ref{I}}}}{\le}c_1q^{d^2+d-3}(2q+7).
	\end{align*}

	Thirdly, we consider $F\in C_2$. Then $|F|\ge g_0$ and $F$ is based on a hyperplane $H$. If $U\subseteq H$ and $(\pi,\tau)$ is a flag of $F$, then $\dim(\pi\cap U)\ge1$ and thus $F\cap M'=\emptyset$. Therefore, we only need to study the case $U\not\le H$, which implies $\dim(U\cap H)=d$. Then, analogously to the proof of \ref{HowCimeetsM}~\ref{HowC2meetsM}, we see that the number of flags of $M'$ in the generic part of $F$ is $|H\cap W'|q^{d^2-d}\theta_{d-1}$ and we have
	\begin{align*}
		|H\cap W'|q^{d^2-d}\theta_{d-1}\le\begin{cases}
			|W'\cap \tau_i|q^{d^2-d}\theta_{d-1}	&\text{if }H\cap U=\tau_i\text{ for }i\in\{1,2\},\\
			(q^{d-1}+\theta_{d-1})q^{d^2-d}\theta_{d-1}				&\text{otherwise.}
		\end{cases}
	\end{align*}
	Since there are exactly $q^{d-1}$ hyperplanes that meet $U$ in $\tau_1$ and as many that meet $U$ in $\tau_2$, it follows that the number of flags of $M'$ that lie in the generic part of at least one independent set of $C_2$ is at most
	\begin{align*}
		c_2(q^{d-1}+\theta_{d-1})q^{d^2-d}\theta_{d-1}+q^{d^2-1}\left(|W'\cap \tau_1|+|W'\cap\tau_2|\right)\theta_{d-1}.
	\end{align*}
	The special part of each independent set of $C_2$ has $\Delta$ flags and thus at most this many flags of $M'$. Using
	\begin{align*}
		|W'\cap \tau_1|+|W'\cap \tau_2|
			 &\le|\tau_1|+|\tau_2|=2\theta_d\overset{\text{\scriptsize\ref{LemmaWithBounds}~\ref{boundsgauss_simple}}}{\le}2(\theta_1+1)q^{d-1}
	\end{align*}
	it follows that
	\begin{align*}
		d_2\le c_2\Delta+c_2(q^{d-1}+\theta_{d-1})q^{d^2-d}\theta_{d-1}+2q^{d^2+d-2}(\theta_1+1)\theta_{d-1}.
	\end{align*}
	We now show that this bound implies
	\begin{align}
		d_2\le c_2q^{d^2+d-3}(2q+7)+2q^{d^2+2d-4}(q^2+4q+4)\label{neweqford2}.
	\end{align}
	For $d=3$, this can easily be verified for all $q\ge 3$. For $d\geq 4$, we use $\Delta\le(q+2)q^{d^2-1}$ given in Inequality (\ref{boundsDelta}) as well as the upper bound given in Lemma \ref{LemmaWithBounds}~\ref{boundsgauss_simple} to find
	\begin{align*}
		d_2\le c_2q^{d^2-1}(2q^{d-1}+6q^{d-2}+4q^{d-3}+q+2)+2q^{d^2+2d-4}(\theta_1+1)^2
	\end{align*}
	and \ref{hypo_on_dq}~\ref{I} implies Equation (\ref{neweqford2}).

	Finally, we note that for $F\in C_3$ we trivially have $|F\cap M'|\le |F|\le e_1$ and, using $c_3\le(q+4+\alpha)q^{d-1}$ from Lemma \ref{ci_properties}~\ref{ci_properties_boundc1c3} as well as $e_1=\alpha q^{d^2+d-2}$, this shows
	\begin{align*}
		d_3\le c_3e_1\le(\alpha q+\alpha^2+4\alpha)q^{d^2+2d-3}.
	\end{align*}

	We have now proved upper bounds of $d_0$, $d_1$, $d_2$ and $d_3$. Substituting these upper bounds in Equation (\ref{maineqintwoplanes}) and dividing by $q^{d^2-d-1}$ yields
	\begin{multline*}
		|W'|q^{d+1}\theta_{d-1}\le(c_1+c_2)q^{2d-2}(2q+7)+(\alpha q+\alpha^2+4\alpha)q^{3d-2}\\
			+q^{2d}(2q^{d-1}+8q^{d-2}+8q^{d-3}+12q)
	\end{multline*}
	and, using the lower bound on $q$ given in Hypothesis \ref{I} of Theorem \ref{hypo_on_dq}, this implies the claim.
\end{proof}

\begin{Lemma}\label{Watmostquadraticinq}
	We have $|W|\le(\alpha+3)q^{d-1}$.
\end{Lemma}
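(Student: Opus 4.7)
My plan is to combine Lemma \ref{Wglobal} with Lemma \ref{twoplanes}. Lemma \ref{Wglobal} bounds $|M|$ in terms of a parameter $z$ (such that at most one hyperplane of $U$ has more than $z$ points of $W$) and the numbers $c_0,c_1,c_2,c_3$, whereas Lemma \ref{twoplanes} bounds the number of $W$-points in the union of two hyperplanes of $U$. Since Lemma \ref{ci_properties}~\ref{ci_properties_sizeofM} gives the exact equality $|M|=w\,q^{d^2-1}\theta_d$ with $w:=|W|$, any upper bound on $|M|$ translates directly into an upper bound on $w$.

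Concretely, I would choose distinct hyperplanes $\tau_1,\tau_2$ of $U$ with the largest and second largest number of $W$-points among all hyperplanes of $U$, and set $z:=|W\cap\tau_2|$. Then at most one hyperplane of $U$ (namely $\tau_1$) contains more than $z$ points of $W$, so the hypothesis of Lemma \ref{Wglobal} is met. Applying Lemma \ref{twoplanes} to the same pair, together with the inequality $z=|W\cap\tau_2|\le|(\tau_1\cup\tau_2)\cap W|$, yields
\begin{align*}
z\theta_{d-1}\le(c_1+c_2)q^{d-3}(2q+7)+q^{2d-3}((\alpha+3)q+\alpha^2+4\alpha).
\end{align*}
Substituting this estimate into the inequality of Lemma \ref{Wglobal} and then eliminating the $c_i$ is the core of the argument. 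I would use: the identity $c_1+c_2+c_3=w-q$, obtained by combining $c_0=\theta_{d+1}-w$ with $|\F|=\theta_{d+1}-q$, which in particular gives $c_2\le c_1+c_2\le w$; the bounds $c_1,c_3\le(q+4+\alpha)q^{d-1}$ from Lemma \ref{ci_properties}~\ref{ci_properties_boundc1c3}; the estimate $c_0+c_1+c_2\le\theta_{d+1}-q$; and finally $\Delta\le(q+2)q^{d^2-1}$ from Notation \ref{notation} together with the Gaussian-coefficient bounds of Lemma \ref{LemmaWithBounds}.

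After dividing the resulting inequality by $q^{d^2+d-2}$ and collecting all linear-in-$w$ contributions on the left, one arrives at a schematic bound of the form $w(q-\alpha-4-O(1/q))\le(\alpha+2)q^d+O(q^{d-1})+w^2\cdot O(q^{-d-1})$, where the leading $(\alpha+2)q^d$ combines the contributions of $(q+2)^2 q$, $\alpha(q+4+\alpha)q^{d-1}$ and $(q+2)^2q^{d-2}$. Solving for $w$ then yields the desired estimate $w\le(\alpha+3)q^{d-1}$, the slack of $1$ in the leading constant being paid for by the lower bound on $q$ in Hypothesis \ref{I} of Theorem \ref{hypo_on_dq}. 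The main technical obstacle is the quadratic-in-$w$ contribution $w^2\cdot O(q^{-d-1})$, which arises from bounding $c_2(c_1+c_2)\le w^2$ after the substitution of Lemma \ref{twoplanes}; this is resolved either by a short bootstrap (the linear part of the inequality alone already yields $w=O(q^{d-1})$, which makes the quadratic term negligible) or by viewing the closed inequality as a quadratic polynomial in $w$ and verifying directly that it is not satisfied by any $w>(\alpha+3)q^{d-1}$ under Hypothesis \ref{I}.
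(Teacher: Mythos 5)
Your setup coincides with the paper's (same choice of $\tau_1,\tau_2$ and $z$, Lemma~\ref{Wglobal} combined with Lemma~\ref{twoplanes} via Lemma~\ref{ci_properties}~\ref{ci_properties_sizeofM}, reduction to a quadratic inequality in $w=|W|$, equivalently in $\delta:=w-q=c_1+c_2+c_3$), but the way you propose to dispose of the quadratic term is where the argument breaks, and this is precisely the delicate point. After eliminating the $c_i$ with the facts you list, the term $c_2\,z\,q^{d^2-d}\theta_{d-1}$ produces, via Lemma~\ref{twoplanes} and $c_2\le\delta$, a contribution of order $\delta^2(q+4)\,q^{d^2-3}$, while the favourable linear term is of order $\delta\,q^{d+2}\,q^{d^2-3}$. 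Hence the closed inequality, read as a quadratic in $\delta$ with positive leading coefficient, has its \emph{larger} root at roughly $q^{d+1}-O(q^d)$, which lies \emph{below} the trivial a priori bound $\delta\le\theta_{d+1}-q\approx q^{d+1}+q^d$. Consequently the inequality is in fact satisfied for $w$ close to $\theta_{d+1}$: the proposed ``direct verification that no $w>(\alpha+3)q^{d-1}$ satisfies it'' is false, and the bootstrap fails too, since with only $w\le\theta_{d+1}$ your term $w^2\cdot O(q^{-d-1})$ is of size about $w(q+4)$ and dominates the left-hand side $w(q-\alpha-4-O(1/q))$ rather than being negligible. This is not a matter of loose constants: none of the ingredients you invoke bounds $c_2$ from above (or $c_0$ from below) beyond $c_2\le w$, and a configuration in which almost all large colour classes are hyperplane-based is perfectly consistent with your inequality while $W$ is almost all of $U$.

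The paper closes exactly this gap with Hypothesis~\ref{IIII}, packaged as Lemma~\ref{ci_properties}~\ref{ci_properties_IisLarge}: $c_0+c_1\ge\frac12(\theta_{d+1}-q-c_3)$, which together with Lemma~\ref{ci_properties}~\ref{ci_properties_boundc1c3} and Lemma~\ref{LemmaWithBounds}~\ref{boundsgauss_simple} yields $\delta=\theta_{d+1}-q-c_0\le\frac12(\theta_{d+1}-q)+c_1+\frac12 c_3<q^{d+1}-\left(\frac{\alpha}{2}+8\right)q^d=:\delta_2$. Only with this separate upper bound can one place $\delta$ to the left of the interval $[\delta_1,\delta_2]$ on which the quadratic is negative (the paper checks $f(\delta_1)<0$ and $f(\delta_2)<0$ with $\delta_1=(\alpha+3)q^{d-1}-q$), and so conclude $\delta<\delta_1$, i.e.\ $|W|\le(\alpha+3)q^{d-1}$. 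Your write-up needs this additional step --- an a priori bound $\delta\lesssim\frac12\theta_{d+1}$ coming from the ``at least half of the large classes are point-based'' hypothesis --- before the quadratic inequality can deliver the claimed conclusion.
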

\begin{proof}
	Let $\tau_1$ and $\tau_2$ be hyperplanes of $U$ such that $|\tau_1\cap W|\ge|\tau_2\cap W|\ge|\tau\cap W|$ for every hyperplane $\tau$ of $U$ other than $\tau_1$ and set $z:=|\tau_2\cap W|$. Then, Lemmas \ref{Wglobal} and \ref{ci_properties}~\ref{ci_properties_sizeofM} show
	\begin{align*}
		|W|(q^{d+1}-c_1)q^{d^2-d-2}\theta_d
			&\le(c_0+c_1+c_2)\Delta+c_2zq^{d^2-d}\theta_{d-1}+c_3e_1\\
			&\hphantom{\le{}}\mathrel{+}q^{d^2-1}\theta_{d-1}\theta_d
	\end{align*}
	and, if we set $\delta:=c_1+c_2+c_3$ and use $c_0+c_1+c_2+c_3=|\F|=\theta_{d+1}-q$ as well as $|W|=\theta_{d+1}-c_0=\delta+q$ from Lemma \ref{ci_properties}~\ref{ci_properties_sizeofW}, then this is equivalent to
	\begin{align*}
		0
			&\le(\theta_{d+1}-q)\Delta+q^{d^2-1}\theta_{d-1}\theta_d+c_3(e_1-\Delta)\\
			&\hphantom{\le{}}\mathrel{+}c_2q^{d^2-d}z\theta_{d-1}+(\delta+q)(c_1-q^{d+1})q^{d^2-d-2}\theta_d.
	\end{align*}
	Before we proceed, we simplify this inequality:
	\begin{enumerate}[label=$\bullet$,noitemsep]
		\item
			in the first term, since $\Delta$ is positive, we may replace $(\theta_{d+1}-q)$ by its upper bound $(q+2)q^{d}$ given in Lemma \ref{LemmaWithBounds}~\ref{boundsgauss_simple};
		\item
			in the second term we use $q^{d^2-1}\theta_{d-1}\theta_d\le(q+5)q^{d^2+2d-3}$, which follows from Lemma \ref{LemmaWithBounds}~\ref{boundsgauss_simple} and the lower bound on $q$ from Hypothesis \ref{I} of Theorem \ref{hypo_on_dq};
		\item
			in the third term, since the coefficient $e_1-\Delta$ of $c_3$ is positive (this is implied by our assumption $\alpha\ge 5$), we may replace $c_3$ by its upper bound $(q+4+\alpha)q^{d-1}$ given in Lemma \ref{ci_properties}~\ref{ci_properties_boundc1c3};
		\item
			and in the final term, since $c_1-q^{d+1}$ is negative (consider the upper bound $c_1\le2(q+4+\alpha)q^{d-1}$ given in Lemma \ref{ci_properties}~\ref{ci_properties_boundc1c3}), we may replace $(\delta+q)q^{d^2-d-2}\theta_d$ by its lower bound $\delta(q+1)q^{d^2-3}$ implied by Lemma \ref{LemmaWithBounds}~\ref{boundsgauss_simple}.
	\end{enumerate}
	This yields
	\begin{align}
		0
			&\le(q+2)q^d\Delta+(q+5)q^{d^2+2d-3}+(q+4+\alpha)q^{d-1}(e_1-\Delta)\nonumber\\
			 &\hphantom{\le{}}\mathrel{+}c_2q^{d^2-d}z\theta_{d-1}+\delta(q+1)q^{d^2-3}(c_1-q^{d+1}).\label{someequation}
	\end{align}
	Next we want to take care of the variable $z$ in the fourth term of this inequality. For that purpose we note that the preceding lemma is applicable to the set $W':=(\tau_1\cup\tau_2)\cap W$ and that $W'$ satisfies
	\begin{align*}
		|W'|\ge|\tau_1\cap W|+|\tau_2\cap W|-\theta_{d-1}\ge 2z-\theta_{d-1},
	\end{align*}
	that is, we have
	\begin{align*}
		2z\theta_{d-1}
			 &\mathrel{\text{\makebox[\widthof{\scriptsize\ref{hypo_on_dq}~\ref{I}}][c]{$\le$}}}|W'|\theta_{d-1}+\theta_{d-1}^2\overset{\text{\scriptsize\ref{LemmaWithBounds}~\ref{boundsgauss_simple}}}{\le}|W'|\theta_{d-1}+(q+2)^2q^{2d-4}\\
			&\overset{\text{\scriptsize\ref{hypo_on_dq}~\ref{I}}}{\le}|W'|\theta_{d-1}+2q^{2d-2}.
	\end{align*}
	We use the bound given by Lemma \ref{twoplanes} (where, for convenience, we replace the 7 by an 8) as well as $c_1+c_2\le\delta$ to first replace the first term on the right hand side and then simplify the result and have
	\begin{align}
		z\theta_{d-1}\le\delta q^{d-3}(q+4)+\frac{q^{2d-3}}{2}((\alpha+5)q+\alpha^2+4\alpha).\label{upperboundz}
	\end{align}
	Now, we reconsider Inequality (\ref{someequation}):
	\begin{enumerate}[label=$\bullet$,noitemsep]
		\item
			using Hypothesis \ref{I} of Theorem \ref{hypo_on_dq} we see that the coefficient $(q^2+q-4-\alpha)q^{d-1}$ of $\Delta$ therein is positive and thus we may replace $\Delta$ by its upper bound $(q+2)q^{d^2-1}$ given in Inequality (\ref{boundsDelta});
		\item
			the coefficients of $c_1$ and $c_2$ therein are non-negative and so we may substitute $c_1$ and $c_2$ by their respective upper bounds $2(q+4+\alpha)q^{d-1}$ and $\delta$, the first of which is given in Lemma \ref{ci_properties}~\ref{ci_properties_boundc1c3} and the second is trivial;
		\item
			we use the upper bound found in Inequality (\ref{upperboundz});
		\item
			we substitute $e_1=\alpha q^{d^2+d-2}$ and, finally, we divide by $q^{d^2-3}$.
	\end{enumerate}
	This yields
	\begin{align*}
		0
			&\le\delta^2(q+4)+\delta q^{d-1}\left(-q^3+\frac{\alpha+7}{2}q^2+\left(\frac{\alpha^2}{2}+4\alpha+10\right)q+(2\alpha+8)\right)\\
			 &\hphantom{\le{}}\mathrel{+}q^{2d}\left((\alpha+1)q+\alpha^2+4\alpha+5\right)+q^{d+1}\left(q^3+3q^2-(\alpha+2)q-(2\alpha+8)\right)
		\end{align*}
		and, using Hypothesis \ref{I} of Theorem \ref{hypo_on_dq} and $\alpha\ge5$, this implies
		\begin{align*}
			0\le\delta^2(q+4)+\delta q^{d+1}\left(\frac{\alpha}{2}+4-q\right)+q^{2d}\left((\alpha+1)q+\alpha^2+5\alpha\right)+q^{d+3}(q+3).
	\end{align*}
	Let $f(\delta)$ denote the right hand side of this equation and set $\delta_1:=(\alpha+3)q^{d-1}-q$ as well as $\delta_2:=q^{d+1}-\left(\frac{\alpha}{2}+8\right)q^d$. We want to show that $\delta$ does not lie in the interval $[\delta_1,\delta_2]$. To see this it suffices to show that $f(\delta_1)<0$ and $f(\delta_2)<0$ hold; the reason for that is, that $f$ is a quadratic polynomial in $\delta$ with positive leading coefficient. Simple calculations show
	\begin{align*}
		f(\delta_1)
			 &=-2q^{2d+1}+\left(\frac{3}{2}\alpha^2+\frac{21}{2}\alpha+12\right)q^{2d}+(\alpha^2+6\alpha+9)q^{2d-1}\\
			 &\hphantom{\le{}}\mathrel{+}(4\alpha^2+24\alpha+36)q^{2d-2}+q^{d+4}+4q^{d+3}-\left(\frac{\alpha}{2}+4\right)q^{d+2}\\
			&\hphantom{\le{}}\mathrel{-}(2\alpha+6)q^{d+1}-(8\alpha+24)q^d+q^3+4q^2
\end{align*}
as well as
	\begin{align*}
			f(\delta_2)
			&=-(\alpha+31)q^{2d+1}+(2\alpha^2+37\alpha+256)q^{2d}+q^{d+4}+3q^{d+3},
	\end{align*}
	and in view of $q\ge \frac32\alpha^2+\frac{21}2\alpha+17$ from Hypothesis \ref{I} of Theorem \ref{hypo_on_dq} both of these are negative.
	Hence, $\delta\notin[\delta_1, \delta_2]$. Finally, we have
	\begin{align*}
		\delta
			 &\mathrel{\makebox[\widthof{\scriptsize\ref{hypo_on_dq}~\ref{I}}][c]{=}}\theta_{d+1}-q-c_0\overset{\text{\scriptsize\ref{ci_properties}~\ref{ci_properties_IisLarge}}}{\le}\frac12(\theta_{d+1}-q)+c_1+\frac12c_3\\
			 &\overset{\makebox[\widthof{\scriptsize\ref{hypo_on_dq}~\ref{I}}][c]{\text{\scriptsize\ref{LemmaWithBounds}~\ref{boundsgauss_simple}}}}{\le}\frac12(q^{d+1}+(q+2)q^{d-1}-q)+c_1+\frac12c_3\\
			 &\overset{\makebox[\widthof{\scriptsize\ref{hypo_on_dq}~\ref{I}}][c]{\text{\scriptsize\ref{ci_properties}~\ref{ci_properties_boundc1c3}}}}{\le}\frac12(q^{d+1}+(q+2)q^{d-1}-q)+2(q+4+\alpha)q^{d-1}\\
			 &\overset{\text{\scriptsize\ref{hypo_on_dq}~\ref{I}}}{<}q^{d+1}-\left(\frac\alpha2+8\right)q^d=\delta_2.
	\end{align*}
	In  the last step we also used $\alpha\ge 5$. From $\delta<\delta_2$ and $\delta\notin[\delta_1,\delta_2]$ we find $\delta<\delta_1$, as claimed.
\end{proof}

\begin{Theorem}\label{Czerothm}
	We have $\F=C_0$.
\end{Theorem}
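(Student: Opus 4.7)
The conclusion $\F = C_0$ is equivalent to $c_1 = c_2 = c_3 = 0$. Writing $\delta := c_1 + c_2 + c_3$ and using Lemma \ref{ci_properties}\ref{ci_properties_partion} together with Lemma \ref{ci_properties}\ref{ci_properties_sizeofW}, we have $|W| = \theta_{d+1} - c_0 = q + \delta \ge q$, so the conclusion amounts to showing $|W| \le q$. Lemma \ref{Watmostquadraticinq} has already established $|W| \le (\alpha+3)q^{d-1}$; the plan is to close the gap between this bound and the target $|W| \le q$ by feeding the new information back into the counting machinery of Lemmas \ref{Wglobal} and \ref{twoplanes}.

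The key point is that, thanks to Lemma \ref{Watmostquadraticinq}, we may now assume $c_1 + c_2 \le \delta \le (\alpha+3)q^{d-1} - q$, which is drastically smaller than the bound on $c_1+c_2$ that was available during the proof of Lemma \ref{Watmostquadraticinq} itself. Concretely, I would first reapply Lemma \ref{twoplanes} to the two hyperplanes $\tau_1, \tau_2$ of $U$ of largest $W$-content; substituting the improved estimate for $c_1 + c_2$ yields a much tighter upper bound on $|W'|\theta_{d-1}$ and hence on $z := |\tau_2 \cap W|$, since the dominant $(c_1+c_2)$-term shrinks by roughly a factor of $q^2$. I would then insert this refined $z$ into Lemma \ref{Wglobal}, combine with the identity $|M| = |W|q^{d^2-1}\theta_d$ from Lemma \ref{ci_properties}\ref{ci_properties_sizeofM}, and collect terms. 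In parallel with the final step of the proof of Lemma \ref{Watmostquadraticinq}, the outcome should be a quadratic inequality $f(\delta) \ge 0$ whose leading coefficient is negative on the feasible range $[0,\,(\alpha+3)q^{d-1} - q]$; verifying $f(0) < 0$ and $f((\alpha+3)q^{d-1} - q) < 0$ under Hypothesis \ref{I} of Theorem \ref{hypo_on_dq} then forces $\delta = 0$.

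The main obstacle is the algebraic bookkeeping: every factor of $q$ has to be tracked precisely so that the new inequality genuinely collapses the slack from $|W| \le (\alpha+3)q^{d-1}$ all the way down to $|W| \le q$. The lower bounds on $q$ in Hypothesis \ref{I} of Theorem \ref{hypo_on_dq} look calibrated for exactly this purpose. If a single iteration should fail to close the gap, the same refinement step can be repeated, each round reducing the bound on $|W|$ by roughly a factor of $q$, so that only a bounded number of iterations is required to reach $|W| = q$ and hence $\F = C_0$.
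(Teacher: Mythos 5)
Your plan---feed the improved bound on $c_1+c_2$ back into Lemmas \ref{twoplanes} and \ref{Wglobal} and iterate---cannot reach the conclusion, because the right-hand side of Lemma \ref{Wglobal} contains terms that do not shrink as $\delta$ shrinks. The additive slack $q^{d^2-1}\theta_{d-1}\theta_d$ by itself equals $|M|=|W|q^{d^2-1}\theta_d$ already when $|W|=\theta_{d-1}\approx q^{d-1}$, and the term $(c_0+c_1+c_2)\Delta$ has $c_0=\theta_{d+1}-q-\delta$, which \emph{grows} as $\delta$ decreases, with $\Delta\approx q^{d^2}$ coming from the crude bound on the special parts of the point-based sets in $C_0$. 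So re-running the machinery, no matter how often, bottoms out at roughly $|W|\lesssim q^{d-1}$ --- essentially the bound of Lemma \ref{Watmostquadraticinq} you already have --- and never gets anywhere near $|W|\le q$; the claim that each round gains a factor $q$ is not justified and is in fact false. The actual proof of $\F=C_0$ requires a genuinely finer count of how the \emph{special parts} of the $C_0$-sets meet $M$: one splits them according to whether the special part is based on a line contained in $U$, a line not contained in $U$, a hyperplane, or none of these; for lines inside $U$ one double-counts pairs of points of $W$ (any two points of $W$ lie on at most one such line), which replaces the naive bound $c_0\cdot q\cdot q^{d^2-1}$ by roughly $\tfrac12|W|^2q^{d^2-1}$; for hyperplane-based special parts one proves $\sum_{(P,H)\in\Omega_3}(|W|-q^{d-1})\le\delta q^{d+1}$; and for the remaining special parts one needs the $\zeta$-analysis, bounding their contribution to $M$ via $\min\{\zeta,|W|+1-\zeta\}$ times lower-order factors. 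None of these ingredients appears in, or follows from, your iteration scheme, and without them the term coming from $C_0$ dominates $|M|$ and blocks the argument.

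The concluding logic is also off. A quadratic $f$ with \emph{negative} leading coefficient that is negative at both endpoints of $[0,(\alpha+3)q^{d-1}-q]$ can perfectly well be positive in the interior, so excluding the whole interval this way does not work; and if you really could show $f(0)<0$ while your counting yields $f(\delta)\ge0$ for the true value of $\delta$, you would have derived a contradiction for \emph{every} admissible $\delta$ (including the correct one), which signals an error upstream rather than proving $\delta=0$. The paper's argument has the opposite shape: it produces a convex quadratic $g$ with $g(\delta)\ge0$, shows $g<0$ on $[d+4,(\alpha+3)q^{d-1}]$ to force $\delta\le d+3$, and only then, using this to make the coefficient of $\omega_4$ negative, drops that term and derives a second, linear-in-$\delta$ inequality that forces $\delta=0$. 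This two-stage bootstrap inside one refined count --- not a repetition of Lemmas \ref{twoplanes} and \ref{Wglobal} --- is what closes the gap from $q^{d-1}$ down to $0$.
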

\begin{proof}
	Put $\delta:=c_1+c_2+c_3$ and note that $c_0+c_1+c_2+c_3=\theta_{d+1}-q$ implies $\delta=\theta_{d+1}-q-c_0$. The strategy of the proof is to show that $\delta=0$ by using Lemma \ref{ci_properties}~\ref{ci_properties_sizeofM} and by finding a good upper bound for $|M|$. From Lemma \ref{ci_properties}~\ref{ci_properties_sizeofW} we have $|W|=q+\delta$ and from
	Lemma \ref{Watmostquadraticinq} we have $|W|\le (\alpha+3)q^{d-1}$. Therefore, for all $F\in C_1\cup C_2$ we have
	\begin{align*}
		|F\cap M|
			&\overset{\text{\scriptsize\ref{HowCimeetsM}}}{\le}
			 (\alpha+3)q^{d^2-1}\theta_{d-1}+\Delta\overset{{\text{\scriptsize\ref{LemmaWithBounds}~\ref{boundsgauss_simple}}}}{\le}(\alpha+3)(q+2)q^{d^2+d-3}+\Delta\\
			 &\overset{\makebox[\widthof{\scriptsize\ref{HowCimeetsM}}][c]{\text{\scriptsize(\ref{boundsDelta})}}}{\le}(\alpha+3)(q+2)q^{d^2+d-3}+(q+2)q^{d^2-1}\overset{\text{\scriptsize\ref{hypo_on_dq}~\ref{I}}}{\le}(\alpha+4)q^{d^2+d-2}.
	\end{align*}
	Since $e_1=\alpha q^{d^2+d-2}$, we know from Lemma \ref{ci_properties} (a) that $|F\cap M|\le(\alpha+4)q^{d^2+d-2}$ for all $F\in C_1\cup C_2\cup C_3$. Therefore, the total contribution from the independent sets in $C_1\cup C_2\cup C_3$ to $M$ is at most $\delta(\alpha+4)q^{d^2+d-2}$.

	By Lemma \ref{HowCimeetsM}~\ref{HowC0meetsM}, the generic parts of the independent sets in $C_0$ are disjoint from $M$. In order to determine the contribution of the sets of $C_0$ to $M$, it remains to consider the special parts $T$ of independent sets $F\in C_0$ and we denote by
	\begin{enumerate}[label=$\bullet$]
		\item $\omega_1$ the number of those with $T$ based on a line that is contained in $U$,
		\item $\omega_2$ the number of those with $T$ based on a line that is not contained in $U$,
		\item $\omega_3$ the number of those with $T$ based on a hyperplane of $\PG(2d,q)$,
		\item $\omega_4$ the number of the remaining ones, which, according to Lemma \ref{Sizeofindependentsets}, are those with $|T|\le q^{d-1}\theta_1\theta_{d-2}\theta_{d-1}^{d-1}$.
	\end{enumerate}
	Furthermore, we let $\Omega_1$ be the set of lines $\ell$ of $U$ such that $F(P,\ell)\in C_0$ for some point $P$ of $\ell$, we let $\Omega_3$ be the set of all point-hyperplane pairs $(P,H)$ with $F(P,H)\in C_0$ such that $U$ is not contained in $H$, and we let $\Omega_4$ be the set of indices $i\in I$ such that $F_i$ is an element of $C_0$ and its special part $T$ has size at most $q^{d-1}\theta_1\theta_{d-2}\theta_{d-1}^{d-1}$. Finally, we let $\widehat{\Omega}_4$ be the set of all flags $f\in M$ such that $f$ is an element of the special part of some independent set $F_x$ with $x\in\Omega_4$.

	Then we have $\omega_1+\omega_2+\omega_3+\omega_4=c_0$, $|\Omega_1|\le\omega_1$, $|\Omega_3|=\omega_3$ and $|\Omega_4|=\omega_4$. In view of the definition of $\Omega_3$ we notice that hyperplane based special parts $T$ only contribute to $M$ when the underlying hyperplane of $\PG(2d,q)$ does not contain $U$.

	Altogether, using Lemma \ref{newlemma} and Lemma \ref{ci_properties}~\ref{ci_properties_sizeofM}, it follows that
	\begin{align}
		|W|q^{d^2-1}\theta_d
			=|M|&\le\delta(\alpha+4)q^{d^2+d-2}+\sum_{ l\in \Omega_1}| l\cap W|q^{d^2-1}\nonumber\\
			&\hphantom{\le{}}\mathrel{+}\omega_2 |W|q^{d^2-d-1}+\sum_{\mathclap{(P,H)\in \Omega_3}}|W|q^{d^2-d}+|\widehat{\Omega}_4|.\label{eqnsmallWW}
	\end{align}
	We proceed by replacing the first sum in the right hand side of the previous inequality by an upper bound. We have
	\begin{align*}
		0
			&\le\sum_{ l\in \Omega_1}(| l\cap W|-1)(| l\cap W|-2)\\
			&=\sum_{ l\in \Omega_1}| l\cap W|(| l\cap W|-1)-2\sum_{l\in \Omega_1}| l\cap W|+2|\Omega_1|\\
			&\le|W|(|W|-1)-2\sum_{l\in \Omega_1}| l\cap W|+2|\Omega_1|,
	\end{align*}
	where the last inequality holds, since any pair of distinct points of $W$ is contained in at most one line of $\Omega_1$. Since $|\Omega_1|\le\omega_1$ and $\omega_1+\omega_2+\omega_3+\omega_4=c_0=\theta_{d+1}-|W|$ (by part \ref{ci_properties_sizeofW} of Lemma \ref{ci_properties}), it follows that
	\begin{align*}
		\sum_{l\in \Omega_1}| l\cap W|\le \frac12|W|(|W|-3)+\theta_{d+1}-\omega_2-\omega_3-\omega_4.
	\end{align*}
	Using this as well as $|\Omega_3|=\omega_3$ in (\ref{eqnsmallWW}) and dividing by $q^{d^2-d}$ we find
	\begin{align*}
		|W|q^{d-1}\left(\theta_d-\frac{|W|-3}{2}\right)
			&\le\delta(\alpha+4)q^{2d-2}+\theta_{d+1}q^{d-1}+\omega_2\frac{|W|-q^d}{q}\\
			&\hphantom{\le{}}\mathrel{+}\sum_{\mathclap{(P,H)\in \Omega_3}}(|W|-q^{d-1})+\frac{|\widehat{\Omega}_4|-\omega_4 q^{d^2-1}}{q^{d^2-d}}.
	\end{align*}
	Since $|W|\le(\alpha+3)q^{d-1}$ by Lemma \ref{Watmostquadraticinq}, the coefficient of $\omega_2$ in the above inequality is not positive and therefore the term with $\omega_2$ can be omitted. Doing so and substituting then $|W|=\delta+q$ we find
	\begin{align}
		(\delta+q)q^{d-1}\underbrace{\left(\theta_d-\frac{\delta+q-3}{2}\right)}_{\ge\theta_1q^{d-1}-\delta}
			&\le\delta(\alpha+4)q^{2d-2}+\theta_{d+1}q^{d-1}\nonumber\\
			&\hphantom{\le{}}\mathrel{+}\sum_{\mathclap{(P,H)\in \Omega_3}}(|W|-q^{d-1})+\frac{|\widehat{\Omega}_4|-\omega_4 q^{d^2-1}}{q^{d^2-d}}.\label{eqnprofinal}
	\end{align}
	We next show that
	\begin{align}
L:=\sum_{(P,H)\in \Omega_3}(|W|-q^{d-1})\le     \delta q^{d+1}.
 \end{align}
 This is trivial, if $|W|\le q^{d-1}$. If $W>q^{d-1}$, then we use $|\Omega_3|\le |C_0|=\theta_{d+1}-|W|$ to see that (where $y:=\frac12(\theta_d+q^{d-1})$)
	\begin{align}\label{eqnkiujh}
		L & \le (\theta_{d+1}-|W|)(|W|-q^{d-1})\\
          & = (|W|-q)q^{d+1}-(|W|-y)^2+y^2-\theta_{d+1}q^{d-1}+q^{d+2}
	\end{align}
	From Lemma \ref{LemmaWithBounds}~\ref{boundsgauss_simple} we find $y\le \frac12q^{d-1}(q+3)$ and, using  Hypothesis \ref{I} of Theorem \ref{hypo_on_dq}, this implies that $y^2-\theta_{d+1}q^{d-1}+q^{d+2}<0$. Therefore \eqref{eqnkiujh} shows that $L\le(|W|-q)q^{d+1}$. Since $|W|=q+\delta$, this establishes $|L|\le \delta q^{d+1}$ in any case.
	Using this and $\theta_{d+1}\le(q^2+q+2)q^{d-1}$ (see Lemma \ref{LemmaWithBounds}~\ref{boundsgauss_simple}) in Inequality (\ref{eqnprofinal}) we find
	\begin{align*}
		(\delta+q)q^{d-1}(\theta_1q^{d-1}-\delta)
			&\le\delta(\alpha+4)q^{2d-2}+(q^2+q+2)q^{2d-2}\\
			&\hphantom{\le{}}\mathrel{+}\delta q^{d+1}+\frac{|\widehat{\Omega}_4|-\omega_4 q^{d^2-1}}{q^{d^2-d}},
	\end{align*}
	which is equivalent to
	\begin{align}
		0	&\le\delta^2q^{d-1}-\delta q^d(q^{d-1}-(\alpha+3)q^{d-2}-q-1)\nonumber\\
			&\hphantom{\le{}}\mathrel{+}2q^{2d-2}+\frac{|\widehat{\Omega}_4|-\omega_4 q^{d^2-1}}{q^{d^2-d}},\label{eqn_last}
	\end{align}

	Finally, we study the cardinality of $\widehat{\Omega}_4$. For all $x\in \Omega_4$ we know from Lemma \ref{Sizeofindependentsets} that $F_x=F(P_x,\mathcal{U}_x)$ for a set $\mathcal{U}_x$ of subspaces of dimension $d$ with 
\begin{align*}
|\mathcal{U}_x|\le (1+q^{-1})\theta_{d-2}\theta_{d-1}^{d-1}.
 \end{align*}
 Furthermore, for all $x\in \Omega_4$ every flag $(\pi,\tau)$ of the special part of $F_x$ that lies in $\widehat{\Omega}_4$ satisfies $\dim(\tau\cap U)=1$ and $\pi\cap U$ is a point of $\tau\cap W$. Motivated by this, we define
	\begin{align*}
		\forall x\in \Omega_4:\zeta_x:=\max\{|\tau\cap W|:\tau\in\mathcal{U}_x,\dim(\tau\cap U)=1\}.
	\end{align*}
	We put $\zeta:=\max\{\zeta_x:x\in \Omega_4\}$ if $\Omega_4\not=\emptyset$, and otherwise we put $\zeta:=q$. There are two remarks to be made.
	\begin{enumerate}[label=$\bullet$]
		\item
			For all $x\in \Omega_4$ and all $\tau\in\mathcal{U}_x$ with $\dim(\tau\cap U)=1$ we have $W\not\ni P_x\in \tau\cap U$, which implies $\zeta_x\le q$. Thus we also have $\zeta\le q$.
		\item
			The definition of $\zeta$ implies that there is a line $l\le U$ with $|l\cap W|=\zeta$. For all $x\in \Omega_4$ with $P_x\in l$ and all $\tau\in\mathcal{U}_x$ with $\dim(\tau\cap U)=1$
 we have $|\tau\cap W|\le\zeta$. For all $x\in \Omega_4$ with $P_x\notin l$ and all $\tau\in\mathcal{U}_x$ with $\dim(\tau\cap U)=1$ we have $|\tau\cap l|\le1$, which implies $|\tau\cap W|\le\min\{\zeta,|W|-(\zeta-1)\}$.
	\end{enumerate}
	This implies
	\begin{align*}
		|\widehat{\Omega}_4|
			&\le(q\zeta+\max\{0,\omega_4-q\}\cdot\min\{\zeta,|W|+1-\zeta\}) q^{d-1}\left(1+q^{-1}\right)\theta_{d-2}\theta_{d-1}^{d-1}\\
			&\le(q^2+\omega_4\cdot\min\{\zeta,|W|+1-\zeta\})(q+1)(q+2)(q+d)q^{d^2-5},
\\
			&\le q^{d^2+1}+\omega_4\cdot\min\{\zeta,|W|+1-\zeta\}(q+1)(q+2)(q+d)q^{d^2-5}.
	\end{align*}
	The second step uses parts \ref{boundsgauss_simple} and \ref{boundsgauss_potenz} of Lemma \ref{LemmaWithBounds}. Substituting this in Equation (\ref{eqn_last}) and dividing by $q^{d-5}$ yields
	\begin{align}\label{eqnnewklaus}
		0	&\le\delta^2q^{4}-\delta q^5(q^{d-1}-(\alpha+3)q^{d-2}-q-1)+2q^{d+3}+q^{6}\nonumber\\
			&\hphantom{\le{}}\mathrel{+}\omega_4(\min\{\zeta,|W|+1-\zeta\}(q+1)(q+2)(q+d)-q^4)
	\end{align}
Since $\min\{\zeta,|W|+1-\zeta\}\le\zeta\le q$ and $\omega_4\le|C_0|\le|\F|=\theta_{d+1}-q\le (q+2)q^d$ we find
	\begin{align*}
		0	&\le\delta^2q^{4}-\delta q^5(q^{d-1}-(\alpha+3)q^{d-2}-q-1)+2q^{d+3}+q^{6}\\
			&\hphantom{\le{}}\mathrel{+}(q+2)(q(q+1)(q+2)(q+d)-q^4)q^d.
	\end{align*}
If we denote the right hand side of this inequality by $g(\delta)$, then $g$ is polynomial in $\delta$ of degree 2 with positive leading coefficient. For $\delta_1:=d+4$ and $\delta_2:=(\alpha+3)q^{d-1}$ we have
	\begin{align*}
		g(\delta_1)
			&=-q^{d+1}(q^3-(\alpha d+8d+4\alpha+22)q^2-(8d+4)q-4d)\\
			&\hphantom{={}}\mathrel{+}(d+5)q^6+(d+4)q^5+(d^2+8d+16)q^4,\\
		g(\delta_2)
			&=-q^{2d+2}((\alpha+3)q-2\alpha^2-12\alpha-18)\\
			&\hphantom{={}}\mathrel{+}(\alpha+3)q^{d+5}+(d+\alpha+6)q^{d+4}\\
			&\hphantom{={}}\mathrel{+}(5d+10)q^{d+3}+(8d+4)q^{d+2}+4dq^{d+1}+q^6.
	\end{align*}
Using Hypothesis \ref{I} of Theorem \ref{hypo_on_dq}, it follows that $g(\delta_1)<0$ and $g(\delta_2)<0$ and hence $g(x)<0$ for all real numbers $x$ with $\delta_1\le x\le\delta_2$. Since $g(\delta)\ge 0$ and $\delta=|W|-q\le (\alpha+3)q^{d-1}$, it follows that $\delta<\delta_1$, that is $\delta\le d+3$.

Put $\beta:=\min\{\zeta,|W|+1-\zeta\}$. Then $|W|+1\ge 2\beta$. Since $|W|=q+\delta$ and $\delta\le d+3$, it follows that $2\beta\le q+d+4$. Using Hypothesis \ref{I} of Theorem \ref{hypo_on_dq} it follows that $\beta\le q-d-3$. This implies that the coefficient of $\omega_4$ in \eqref{eqnnewklaus} is negative. Hence
	\begin{align*}
		0	&\le-\delta q^4(q^d-\delta-(\alpha+3)q^{d-1}-q^2-q)+2q^{d+3}+q^{6}
	\end{align*}
    and since $\delta=|W|-q\le(\alpha+3)q^{d-1}$ (Lemma \ref{Czerothm}) we find $\delta<1$ from Hypothesis \ref{I} of Theorem \ref{hypo_on_dq}. Hence $\delta=0$, that is, $|C_0|=\theta_{d+1}-q-\delta=\theta_{d+1}-q=|\F|$ and thus $C_0=\F$.
\end{proof}

Note that this theorem concludes the proof of Theorem \ref{hypo_on_dq} and it only remains to prove Theorem \ref{main1}.

\textbf{Proof of Theorem \ref{main1}:} Suppose that $d$ is such that Conjecture \ref{conject} holds with $\alpha=\max\{5,\rho(d)\}$ and suppose that $q>3\cdot 112^{2^{d+1}-1}\cdot2^{-d-1}$ as well as $q\ge\frac{3}{2}\alpha^2+\frac{21}{2}\alpha+17$. Consider a coloring of the Kneser graph $\Gamma_d$, with $t\le \theta_{d+1}-q$ color classes $C_1,\dots,C_t$. Define $C_i:=\emptyset$ for $t<i\le\theta_{d+1}-q$. Each set $C_i$ is an independent set of flags of type $\{d-1,d\}$ in $\PG(2d,q)$. If $|C_i|>e_1=\alpha q^{d^2+d-2}$, then let $\bar C_i$ be a maximal independent set containing $C_i$; it follows from Conjecture \ref{conject} that $\bar C_i$ is one of the sets defined in Example \ref{Examples_IndependentSets} and thus we have $g_0\le|\bar C_i|\leq e_0$. For each $i$ we now define a set $F_i$ and for all $i$ with $|C_i|\le e_1$ we simply set $F_i:=C_i$. Now, consider an index $i$ with $|C_i|>e_1$. If there exists an index $j<i$ with $|C_j|>e_1$ and such that $\bar C_i$ and $\bar C_j$ have the same generic part, then let $F_i$ be the special part of $\bar C_i$ (this implies $|F_i|\leq q^d\gauss{2d-1}{d-1} <e_1$) and otherwise set $F_i:=\bar C_i$. Let $S$ be the set of indices $i$ with $|F_i|>e_1$. Consider the multiset $\F=\{F_i\mid 1\le i\le\theta_{d+1}-q\}$. Then each $F_i$ is an independent set and the union of the $F_i$ is the set of all flags of type $\{d-1,d\}$ in $\PG(2d,q)$. We consider two cases.

Case 1. For at least $\frac12|S|$ indices $i\in S$ the generic part of $F_i$ is based on a point. Then $\F$ satisfies all hypotheses of Theorem \ref{hypo_on_dq}. The conclusion of this theorem implies $S=\{1,2,\dots,\theta_{d+1}-q\}$, we know that the generic part of every set $F_i$ is based on a point and the base points are $\theta_{d+1}-q$ distinct points of a subspace of dimension $d+1$. This implies $t=\theta_{d+1}-q$ as well as $|C_i|>e_1$ and $F_i=\bar C_i$ for all $i$. Notice that $F_i=\bar C_i$ might not be uniquely determined by $C_i$, however its base point is. This follows from the fact that two maximal independent sets based on distinct points (are easily seen to) have less than $e_1$ elements in common and hence $C_i$ can not be contained in both. This proves Theorem \ref{main1} in this case.

Case 2. For less than $\frac12|S|$ indices $i\in S$ the generic part of $F_i$ is based on a point. Then for more than $\frac12|S|$ indices $i$ the generic part is based on a hyperplane and we can apply the first case in the dual space. This proves Theorem \ref{main1}.

\end{subsection}
\end{section}

\section*{Acknowledgement}
The research of Jozefien D'haeseleer is supported by the FWO (Research Foundation Flanders).

\end{document}